\renewcommand{\d}{\mathrm{d}}
\newcommand{\cA}{{\mathcal A}}
\newcommand{\cI}{{\mathcal I}}
\newcommand{\cV}{{\mathcal V}}
\newcommand{\bbC}{{\mathbb C}}
\newcommand{\bbP}{{\mathbb P}}
\newcommand{\bbR}{{\mathbb R}}
\newcommand{\bbZ}{{\mathbb Z}}
\newcommand{\GL}{\operatorname{GL}}
\newcommand{\Gr}{\operatorname{Gr}}
\newcommand{\SL}{\operatorname{SL}}
\newcommand{\SO}{\operatorname{SO}}
\newcommand{\CO}{\operatorname{CO}}
\newcommand{\SU}{\operatorname{SU}}
\newcommand{\Un}{\operatorname{U}}
\newcommand{\Ric}{\operatorname{Ric}}
\newcommand{\Ad}{\operatorname{Ad}}
\newcommand{\Hess}{\operatorname{Hess}}
\DeclareMathOperator{\tr}{tr}
\DeclareMathOperator{\Hom}{Hom}
\newcommand{\phm}{\phantom{-}}
\newcommand{\eugl}{\operatorname{\mathfrak{gl}}}
\newcommand{\euco}{\operatorname{\mathfrak{co}}}
\newcommand{\euso}{\operatorname{\mathfrak{so}}}
\newcommand{\eusu}{\operatorname{\mathfrak{su}}}
\newcommand{\euh}{\operatorname{\mathfrak{h}}}
\newcommand{\eum}{\operatorname{\mathfrak{m}}}
\newcommand{\w}{{\mathchoice{\,{\scriptstyle\wedge}\,}{{\scriptstyle\wedge}}
      {{\scriptscriptstyle\wedge}}{{\scriptscriptstyle\wedge}}}}
\newcommand{\be}{\begin{equation}}
\newcommand{\ee}{\end{equation}}
\newcommand{\bpm}{\begin{pmatrix}}
\newcommand{\epm}{\end{pmatrix}}
\numberwithin{equation}{section}
\newtheorem{theorem}{Theorem}
\newtheorem{proposition}{Proposition}
\theoremstyle{remark}
\newtheorem{remark}{Remark}
\newtheorem{example}{Example}
\begin{document}

\author[R. Bryant]{Robert L. Bryant}
\address{Duke University Mathematics Department\\
         PO Box 90320\\
         Durham, NC 27708-0320}
\email{\href{mailto:bryant@math.duke.edu}{bryant@math.duke.edu}}
\urladdr{\href{http://www.math.duke.edu/~bryant}%
         {http://www.math.duke.edu/\lower3pt\hbox{\symbol{'176}}bryant}}

\title[Notes on EDS]
      {Notes on Exterior Differential Systems}

\date{December 12, 2013}

\begin{abstract}
These are notes for a very rapid introduction 
to the basics of exterior differential systems 
and their connection with what is now known as Lie theory, 
together with some typical and not-so-typical applications
to illustrate their use.
\end{abstract}

\subjclass{
 58A15   
}

\keywords{Exterior differential systems, integral manifolds, 
Cartan-K\"ahler}

\thanks{
These notes were mostly written during a December 2013 workshop 
\emph{Exterior Differential Systems and Lie Theory} 
at the Fields Institute in Toronto, my participation in which 
was supported by the Clay Mathematics Institute.  
My thanks go to the CMI as well as to the National Science Foundation 
for its support via the grant DMS-1359583.  
\hfill\break
\hspace*{\parindent} 
This is Draft Version~$0.3$, last modified on May 12, 2014.
}

\maketitle

\setcounter{tocdepth}{2}
\tableofcontents

\section{Introduction}\label{sec: intro}

Around the beginning of the 20th century, 
\'Elie Cartan developed a theory of partial differential equations
that was well-suited for the study of local problems in differential
geometry.  His fundamental insight was that many geometric problems
(roughly speaking, those that were independent of choice of coordinates) 
could be recast as problems 
in which one has a set of functions and differential forms 
satisfying some given set of `structure equations', 
i.e., conditions on the exterior derivatives 
of the given functions and forms.  
The invariance properties of the exterior derivative 
then made possible an approach to differential equations 
that Cartan then developed and applied 
in a very large number of situations, beginning with his theory
of `infinite groups' (what we now call pseudo-groups) 
and continuing throughout his later work in classical differential
geometry.

In Cartan's formulation, he was mainly concerned with systems 
that consisted of functions and $1$-forms, and their exterior derivatives.
Erich K\"ahler~\cite{Kahler1934} realized that Cartan's theory 
could be usefully extended to systems generated by forms of arbitrary degree. 
The resulting extension is now known as Cartan-K\"ahler Theory 
and Cartan's general approach is now generally 
called the theory of `exterior differential systems', or simply `EDS'  

Cartan and his students used EDS with great success in the cases 
in which a given problem could be recast as a system satisfying
the hypothesis of \emph{involutivity}, 
which many important systems did.  
Cartan introduced the process of \emph{prolongation}
as an algorithm whose purpose is to replace a given exterior differential 
system with one that has essentially the same solutions 
but is also involutive, the idea being that all of the solutions 
of any system should be describable as the solutions of an involutive system.  
Cartan was never able to prove that his prolongation process 
succeeded in all cases, but, later, Masatake Kuranishi~\cite{Kur} 
did succeed in proving a version of the desired prolongation theorem, 
one that applies in nearly all cases of interest, 
thus essentially completing the theory on an important point.

In these notes, I introduce the basics of exterior
differential systems, covering the essential definitions and theorems,
but I do not attempt to discuss the proofs of fundamental results 
such as Cartan's Bound (aka Cartan's Test), the Cartan-K\"ahler Theorem,
or the Cartan-Kuranishi Theorem.  
For proofs of these, the reader can consult any of the standard sources 
in the subject, such as~\cite{BCGGG}.

My choice of subject matter and examples was heavily influenced
by the audience of the workshop at which this material was written,
and I have made no attempt to give a more comprehensive view 
of the standard topics in exterior differential systems.  
Rather, I have focused on applications to `Cartan structure equations'
associated to differential geometric problems and tried to show
how Cartan's theory is connected with modern-day Lie theory.

\subsection{Differential ideals}\label{ssec: diffideals}
Let $M^n$ be a smooth $n$-manifold.  
An \emph{exterior differential system} on~$M$ 
is a graded, differentially closed ideal~$\cI\subset\cA^*(M)$.  

While it is not strictly necessary, it simplifies some statements
if one assumes that $\cI$ is generated in positive degrees, 
i.e., $\cI^0 = \cI\cap \cA^0(M) = (0)$, 
so I will assume this throughout the notes.

\subsection{Integral manifolds and elements}\label{ssec: intmfds}
An \emph{integral manifold} of~$\cI$ is a submanifold~$f:N\to M$
such that $f^*(\phi) = 0$ for all $\phi\in\cI$.  

\begin{remark}
In most applications of exterior differential systems, 
the integral manifolds of a certain dimension 
(often the maximal dimension) of a given differential ideal~$\cI$ 
represent the local solutions of some geometric problem 
that can be expressed in terms of partial differential equations.
Thus, one is interested in techniques 
for describing the integral manifolds of a given~$\cI$.
\end{remark}

An \emph{integral element} of~$\cI$ is a $p$-plane~$E\subset \Gr_p(TM)$
such that $\iota_E^*(\phi) = 0$ for all $\phi\in\cI$.  
The set of $p$-dimensional integral elements of $\cI$ 
is a closed subset $\cV_p(\cI)\subseteq\Gr_p(TM)$.  
(It is not always a smooth submanifold of this bundle.)

\begin{remark}
Every tangent plane to an integral manifold of~$\cI$ 
is an integral element of~$\cI$.  The fundamental problem 
of exterior differential systems is to decide whether, 
for a given $E\in\cV_p(I)$, there is an integral manifold of~$\cI$ 
that has~$E$ as one of its tangent spaces.
\end{remark}

\subsection{Polar spaces}\label{ssec: polarspaces}
Fix~$E\in\cV_p(\cI)$, with $E\subset T_xM$, 
and let $e_1,\ldots,e_p$ be a basis of~$E$.  
The \emph{polar space} (sometimes called the \emph{enlargement space}) 
of~$E$ is the subspace
$$
H(E) = \{ v\in T_x M\ |\ \phi(v,e_1,\ldots,e_p) = 0\ 
            \forall \phi\in \cI^{p+1}\ \} \subset T_xM.
$$
From its definition, any $E_+\in\cV_{p+1}(\cI)$ that contains~$E$ 
must be contatined in~$H(E)$ and, conversely, any $E_+\in\Gr_{p+1}(TM)$
that satisfies $E\subset E_+\subseteq H(E)$ satisfies~$E_+\in\cV_{p+1}(\cI)$. 
Set $c(E) = \dim\bigl(T_xM/H(E)\bigr)$.

While determining the structure of~$\cV_p(\cI)$ can be difficult,
one sees that the problem of understanding the $(p{+}1)$-dimensional
extensions that are integral elements 
of a given $p$-dimensional integral element is essentially a linear one.

\subsection{Cartan's Bound and characters}\label{ssec: CartanBound}
Let $E\in\cV_n(I)$ be fixed, and let $F = (E_0,E_1,\ldots,E_{n-1})$
be a flag of subspaces of~$E$, with $\dim E_i = i$.  Thus,
$$
(0)_x = E_0 \subset E_1\subset\cdots\subset E_{n-1}\subset E\subset T_xM.
$$
Note that $E_i$ belongs to $\cV_i(I)$.  The following result is
due to Cartan and K\"ahler.

\begin{proposition}[Cartan's Bound]
Given $E\in\cV_n(\cI)$ and a flag~$F = (E_i)$ in~$E$ as above,
there is an open $E$-neighborhood~$U\subset\Gr_n(TM)$ 
such that $\cV_n(\cI)\cap U$ is contained in a smooth submanifold of~$U$
of codimension
$$
c(F) = c(E_0) + c(E_1) + \cdots + c(E_{n-1}).
$$
\end{proposition}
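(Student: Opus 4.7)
The plan is to exhibit, in an open neighborhood of~$E$ in $\Gr_n(TM)$,
a smooth submanifold of codimension~$c(F)$ that contains every nearby
integral $n$-element. The technical heart will be a block
lower-triangular Jacobian computation whose diagonal blocks encode the
polar-space data along the flag~$F$.

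First I would set up adapted local coordinates. Choose a coframe
$(\om^i,\pi^a)$ near~$x$, with $i=1,\ldots,n$ and $a=1,\ldots,s$
(where $s=\dim M-n$), whose dual frame
$(\eb_1,\ldots,\eb_n,\epsilon_1,\ldots,\epsilon_s)$ at~$x$ satisfies
$E_i=\mathrm{span}(\eb_1,\ldots,\eb_i)$, so that $E=E_n$. Nearby
$n$-planes are then parameterized by $s\times n$ matrices $p=(p^a_l)$
via $E_p=\{\pi^a-p^a_l\om^l=0\}$, with moving frame
$\eb_l(p)=\eb_l+p^a_l\epsilon_a$; the plane~$E$ corresponds to $p=0$.

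Next I would select the defining forms. For each $i=0,\ldots,n-1$, set
$c_i=c(E_i)$ and choose
$\phi^{(i+1)}_1,\ldots,\phi^{(i+1)}_{c_i}\in\cI^{i+1}$ such that the
polar functionals $v\mapsto\phi^{(i+1)}_k(v,\eb_1,\ldots,\eb_i)$ span
the annihilator of $H(E_i)$ in $(T_xM)^*$; by the very definition of
$c(E_i)$, exactly $c_i$ such forms suffice. Then set
\[
F^{(i+1)}_k(p)\,=\,\phi^{(i+1)}_k\!\bigl(\eb_1(p),\ldots,\eb_{i+1}(p)\bigr),
\qquad k=1,\ldots,c_i,\ \ i=0,\ldots,n-1.
\]
There are $c(F)=c_0+c_1+\cdots+c_{n-1}$ such functions in total, and each
vanishes on $\cV_n(\cI)$ near~$E$, because if $E_p$ is integral then
$\phi^{(i+1)}_k|_{E_p}=0$, so $\phi^{(i+1)}_k$ vanishes on any
$(i+1)$-tuple of vectors from $E_p$, in particular on
$\bigl(\eb_1(p),\ldots,\eb_{i+1}(p)\bigr)$.

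The central step is computing the Jacobian of the system $(F^{(i+1)}_k)$
at $p=0$. Since $F^{(i+1)}_k$ depends only on
$\eb_1(p),\ldots,\eb_{i+1}(p)$, it depends only on the $p^a_l$ with
$l\le i+1$, so when rows are grouped by~$i$ and columns by~$l$ the
Jacobian is block lower triangular, with the $(i,l)$-block vanishing
whenever $l>i+1$. By multilinearity, the diagonal block $l=i+1$ at $p=0$
is
\[
M_i\,=\,\bigl(\phi^{(i+1)}_k(\eb_1,\ldots,\eb_i,\epsilon_a)\bigr)_{1\le k\le c_i,\,1\le a\le s}.
\]
Because $E\in\cV_n(\cI)$, the form $\phi^{(i+1)}_k$ vanishes on every
$(i+1)$-tuple drawn from~$E$, so the polar functional
$v\mapsto\phi^{(i+1)}_k(\eb_1,\ldots,\eb_i,v)$ vanishes on all of~$E$ and
descends to a functional on $T_xM/E$. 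By our choice of the
$\phi^{(i+1)}_k$, these $c_i$ descended functionals are linearly
independent on $T_xM/E$, so $M_i$ has full row rank~$c_i$. The whole
Jacobian therefore has rank $\sum_i c_i=c(F)$, and the implicit function
theorem furnishes a smooth submanifold of codimension~$c(F)$ in a
neighborhood~$U$ of~$E$ on which every $F^{(i+1)}_k$ vanishes; by
construction $\cV_n(\cI)\cap U$ is contained in it.

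The main obstacle I anticipate is the bookkeeping in the Jacobian step:
verifying both that entries with $l>i+1$ truly vanish (a direct
consequence of $F^{(i+1)}_k$ only involving the first $i+1$ vectors of
the moving frame) and that polar independence of the chosen forms at~$E$
transfers cleanly to row independence of~$M_i$ after one restricts to the
complementary directions~$\epsilon_a$. Both points follow mechanically
once the polar-space definition is unwound using the fact that $E$ itself
is integral; the remainder is a routine appeal to the implicit function
theorem.
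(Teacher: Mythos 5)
The paper states Cartan's Bound without proof, deferring to standard references such as \cite{BCGGG}; your argument is essentially the proof given there, and it is correct. The key points all check out: the polar functionals of your chosen forms $\phi^{(i+1)}_k$ lie in $E^\perp$ because $E\subseteq H(E_i)$ is itself integral, so their linear independence in $H(E_i)^\perp\subseteq E^\perp$ passes to linear independence after restriction to $\mathrm{span}(\epsilon_a)\simeq T_xM/E$, giving each diagonal block $M_i$ full row rank; combined with the block lower-triangular structure this yields rank $c(F)$ for the Jacobian, and the regular value theorem finishes the argument. The only presentational point worth tightening is that your parameterization by $p=(p^a_l)$ only covers the fiber of $\Gr_n(TM)$ over~$x$; to get a neighborhood $U$ in the full Grassmannian bundle you should let the base point vary as well (using the coframe fields near $x$ to define $F^{(i+1)}_k$ on planes at nearby points), but since surjectivity of the differential already holds in the fiber directions, this changes nothing in the rank computation.
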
  

If $V_n(\cI)$ near~$E$ actually \emph{is} 
a smooth submanifold of~$\Gr_n(TM)$ of codimension~$c(F)$, 
then $E$ is said to be \emph{Cartan-ordinary}  
and the flag~$F$ is said to be a \emph{regular flag} of~$E$.

Let~$\cV^o_n(\cI)\subset\cV_n(\cI)$ denote the subset 
consisting of Cartan-ordinary integral elements of~$\cI$. 
It is an open (but possibly empty) subset of~$\cV_n(\cI)$ 
that is a smooth submanifold of~$\Gr_n(TM)$,
and the basepoint projection~$\pi:\cV^o_n(\cI)\to M$ is a submersion.
(This is because of the standing assumption that~$\cI^0=(0)$.) 

A `dual' version of Cartan's Bound (also known as `Cartan's Test') 
is often useful:
For any~$E\in\cV_n(\cI)$ and any flag~$F=(E_0,E_1,\ldots,E_{n-1})$,
the \emph{character sequence} of~$F$ is the sequence of nonnegative
integers
$$
\bigl(s_0(F),s_1(F),\ldots,s_n(F)\bigr)
$$
such that
$$
s_i(F) = \left\{ 
\begin{aligned} 
&c(E_0) &\quad i=0,\\
&c(E_i)-c(E_{i-1}) &\quad 1\le i < n,\\
&\dim H(E_{n-1}) - n  &\quad i=n.
\end{aligned} \right.
$$ 
Then Cartan's bound can also be expressed as saying that, near~$E$,
the subset~$\cV_n(\cI)$ is contained in a submanifold of~$\Gr_n(TM)$
of dimension
$$
\dim M + s_1(F) + 2s_2(F) + \cdots + ns_n(F),
$$
and that, if, near~$E$, the subset~$\cV_n(\cI)$ \emph{is} 
a submanifold of~$\Gr_n(TM)$ of this dimension, then $E$ 
is Cartan-ordinary and the flag~$F$ is regular.

When $E$ is Cartan-ordinary,
the character sequence $\bigl(s_k(F)\bigr)$ 
is the same for all regular flags $F=(E_0,E_1,\ldots,E_{n-1})$ 
in~$E$.  This common sequence is known as the sequence
of \emph{Cartan characters} of~$E$ 
and simply written as the sequence~$\bigl(s_k(E)\bigr)$. 
Moreover, the characters~$s_k$ are constant 
on the connected components of~$\cV^o_n(\cI)$.

\section{Cartan-K\"ahler Theory}

\subsection{A form of the Cartan-K\"ahler Theorem}
The main result needed in these notes is the following version 
of the Cartan-K\"ahler Theorem.%
\footnote{The standard, slightly stronger version 
of the Cartan-K\"ahler Theorem has more technical hypotheses 
and so takes a bit longer to state.}

\begin{theorem}[Cartan-K\"ahler]
Suppose that $\cI$ is a real-analytic exterior differential system on~$M$
that is generated in positive degree 
and that $E\in\cV_n(\cI)$ is Cartan-ordinary.  
Then there exists a  real-analytic integral manifold of~$\cI$ 
that has $E$ as one of its tangent spaces.
\end{theorem}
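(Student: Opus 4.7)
The plan is to prove the theorem by induction on the dimension $i$ of integral manifolds that one successively builds inside $M$, each containing the previous one. Concretely, I would fix a regular flag $F=(E_0,E_1,\ldots,E_{n-1})$ in $E$ (which exists because $E$ is Cartan-ordinary) and construct a nested chain of real-analytic integral manifolds $\{x\}=N_0\subset N_1\subset\cdots\subset N_n$ with $\dim N_i = i$ and $T_xN_i\supset E_i$ for every $i$. The terminal $N_n$ is the desired integral manifold, since $T_xN_n$ is $n$-dimensional and contains $E$.

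For the inductive step, assume $N_i$ has been built. I want to extend it to $N_{i+1}$ with $T_xN_{i+1}\supset E_{i+1}$. Regularity of the flag ensures that, near $E_i$, the $(i{+}1)$-dimensional integral elements extending $E_i$ form a smooth family of the expected codimension $c(E_i)$, and equivalently that $H(E_i)$ has codimension $c(E_i)$ in $T_xM$. To reduce the extension problem to a well-posed Cauchy problem, I would pick a real-analytic ``restraining'' submanifold $R\subset M$ of dimension $i{+}1{+}c(E_i)$ that contains $N_i$, passes through $x$ with $T_xR\supset E_{i+1}$, and meets the polar space cleanly as $T_xR\cap H(E_i) = E_{i+1}$. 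In local real-analytic coordinates on $R$ adapted so that $N_i$ is a coordinate slice and the $c(E_i)$ ``unknown'' directions in $R$ normal to $N_i$ are singled out, the demand that an $(i{+}1)$-dimensional graph in $R$ extending $N_i$ annihilate the generators of $\cI^{i+1}$ becomes a determined quasilinear Cauchy system of $c(E_i)$ equations in $c(E_i)$ unknowns, with $N_i$ supplying the initial slice. The Cauchy-Kovalevskaya theorem then produces a unique local real-analytic solution, which I call $N_{i+1}$.

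The main obstacle is showing that the $N_{i+1}$ produced this way is actually an integral manifold of all of $\cI$ on $M$, not merely of the restricted system on $R$ from which the Cauchy problem was written. I would argue this by downward induction on the form degree $j$: for $j>i{+}1$ every $\phi\in\cI^j$ vanishes on $N_{i+1}$ for dimensional reasons, and for $j\le i{+}1$ the Cauchy data $\phi|_{N_i}$ vanish by the inductive hypothesis on $N_i$. Applying Cartan's magic formula $\mathcal{L}_T\phi = \d(T\lhk\phi)+T\lhk\d\phi$ with a vector field $T$ on $R$ that generates the Cauchy direction transverse to $N_i$ inside $N_{i+1}$ converts the desired identity $\phi|_{N_{i+1}}=0$ into a homogeneous linear first-order ODE system along the flow of $T$ for the coefficients of $\phi|_{N_{i+1}}$. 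The term $T\lhk\d\phi$ is controlled because $\d\phi\in\cI^{j+1}$ by differential closure of $\cI$ and therefore falls under the downward-inductive hypothesis; combined with zero initial data, uniqueness of solutions to the ODE system forces $\phi|_{N_{i+1}}=0$. This is precisely the step where the hypothesis that $\cI$ is closed under exterior differentiation is essential.

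Iterating the extension $n$ times starting from $N_0=\{x\}$ yields the required real-analytic integral manifold $N_n$. Throughout, the Cartan-ordinary hypothesis is what guarantees that the codimensions $c(E_i)$ predicted by the regular flag agree with the actual codimensions of the families of extending integral elements, so that a restraining manifold $R$ of the correct dimension exists at every stage and the Cauchy problems one writes down are neither over- nor under-determined.
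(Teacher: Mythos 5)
You should first note that the paper does not actually prove this theorem: the introduction states explicitly that proofs of Cartan's Bound, the Cartan--K\"ahler Theorem, and the Cartan--Kuranishi Theorem are omitted, with the reader referred to~\cite{BCGGG} (the only in-paper hint is the remark that the integral manifold is built ``by solving a sequence of initial value problems via the Cauchy-Kowalevski Theorem''). So there is no in-paper argument to compare against; what you have written is an outline of the standard proof from that reference: the ``one-step'' Cartan--K\"ahler theorem --- extending a regular $i$-dimensional integral manifold to an $(i{+}1)$-dimensional one inside a restraining manifold $R$ of dimension $i{+}1{+}c(E_i)$ by a determined Cauchy--Kowalevskaya system --- iterated along a regular flag. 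Your dimension counts are correct, and you have correctly located where the differential closure of~$\cI$ enters.

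As a proof, however, the sketch has two genuine thin spots. First, the inductive hypothesis must be stronger than ``$N_i$ is an integral manifold with $T_xN_i=E_i$'': one needs every tangent space of $N_i$ (after shrinking) to be a regular integral element whose polar space has the constant codimension $c(E_i)$, so that the polar equations fed into Cauchy--Kowalevskaya have constant rank along the entire initial slice and not merely at $x$; this regularity (an open condition guaranteed by the regular flag) must be carried along in the induction. Second, and more seriously, the verification that the Cauchy--Kowalevskaya solution annihilates all of $\cI$ is the heart of the proof, and the ``downward induction on degree plus Cartan's formula'' argument does not close up as stated. For $\phi\in\cI^{j}$ with $j\le i{+}1$, the quantities to be propagated in the Cauchy direction include not only the tangential components of $\phi|_{N_{i+1}}$ (which vanish on $N_i$ by induction) but also the components of $T\lhk\phi$ along $N_i$, and these are \emph{not} zero by the inductive hypothesis: the Cauchy problem only forces $c(E_i)$ of them, for one particular spanning set of polar equations. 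One must use the constant rank of the polar system to conclude that those $c(E_i)$ relations annihilate $T\lhk\phi$ along $N_i$ for \emph{every} $\phi\in\cI^{i+1}$, and then assemble a single closed homogeneous linear first-order system for the full collection of tangential and transverse coefficients of a finite generating set of $\cI$ closed under $\d$; only then does uniqueness with zero initial data give the conclusion. That bookkeeping is the real content of the proof (see \cite{BCGGG}, Chapter~III) and cannot be compressed into the one sentence you give it.
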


\begin{remark}[Generality]
The Cartan-K\"ahler theorem constructs the desired integral manifold
by solving a sequence of initial value problems via the Cauchy-Kowalevski
Theorem.  At each step in the sequence, one gets to choose 
appropriate initial data that determine the resulting integral manifold. 
In fact, looking at the proof of the Cartan-K\"ahler theorem, 
one sees that there is an open $E$-neighborhood~$U\subset\cV_n(\cI)$
such that the initial data that determines a connected integral manifold
of~$\cI$ whose tangent spaces belong to~$U$
consists of $s_0(E)$ constants, $s_1(E)$ functions of $1$ variable, 
$s_2(E)$ functions of $2$ variables, $\ldots$, 
and $s_n(E)$ functions of $n$ variables that are freely specifiable
(i.e. `arbitrary'), subject only to some open conditions.

Thus, one usually says that the Cartan-ordinary integral manifolds
of~$\cI$ (i.e., the ones whose tangent spaces are Cartan-ordinary)
`depend on $s_0$ constants, $s_1$ functions of $1$ variable, 
$s_2$ functions of $2$ variables, $\ldots$, and $s_n$ functions of $n$ variables'.
\end{remark}

\begin{remark}[Significance of the last nonzero character]
One sometimes encounters statements such as ``only the last nonzero
character really matters,'' which the writer usually phrases as
something like ``the solution depends on $s_q$ functions of $q$ variables''
(where $s_q>0$ and $s_k=0$ for all $k>q$), 
thus ignoring all of the $s_i$ for $i<q$.   

The reason for this is that there is nearly always more than one way 
to describe the local solutions of a given geometric problem 
as the Cartan-ordinary integral manifolds 
of some exterior differential system~$\cI$.  
Two such descriptions might well have different character sequences
(some examples will be given below), 
but they always have the same last nonzero character (at the same level~$q$).  

Nevertheless, for any given exterior differential system~$\cI$, 
the full character sequence does have intrinsic meaning.
\end{remark}

\subsection{Involutive tableau}\label{ssec: InvTab}
Let $V$ and $W$ be vector spaces over~$\bbR$ of dimensions~$n$ and~$m$,
respectively, and let~$A\subset W\otimes V^*$
be an $r$-dimensional linear subspace of the linear maps from~$V$ to~$W$.%
\footnote{In the literature, $A$ is often called a \emph{tableau},
which is simply a borrowing from the French of the word used to describe
a subspace of linear maps written out as a matrix whose entries satisfy
some given linear relations.}  
One wants to understand the space of maps~$f:V\to W$ 
with the property that $f'(x)$ lies in~$A$ for all $x\in V$.
Thus, $f$ is being required to satisfy 
a set of homogeneous, constant coefficient, linear, 
first-order partial differential equations, 
a very basic system of PDE.

Set up an exterior differential system as follows:  
Let $M = W\times V\times A$
and let $u:M\to W$, $x:M\to V$, and $p:M\to A$ denote the projections.
Let $\cI$ be the ideal 
generated by the components of the $W$-valued $1$-form
$\theta = \d u - p\,\d x$.  Thus, $\cI$ is generated 
in degree~$1$ by $m = \dim W$ $1$-forms 
and in degree~$2$ by the (at most) $m$ independent $2$-forms 
that are the components of $\d\theta = -\d p \w \d x$.

An $n$-plane $E\in\Gr_n(TM)$ at $(u_0,x_0,p_0)\in M$ 
on which the components of $\d x$ are independent 
will be described by equations of the form
$$
\d u - q(E)\,\d x = \d p - s(E)\,\d x = 0
$$
where $q(E)$ belongs to $W\otimes V^*$ and $s(E)$ 
belongs to $A\otimes V^*\subset (W\otimes V^*)\otimes V^*$.  
It will be an integral element of~$\cI$ if and only if, 
first $q(E)=p_0$, and, second $\bigl(s(E)\,\d x\bigr)\w \d x = 0$.  
This last condition is equivalent 
to requiring that $s(E)$ lie in the intersection 
$$
A^{(1)} = (A\otimes V^*) \cap\bigl(W\otimes S^2(V^*)\bigr)
$$
Let $r^{(1)}$ denote the dimension of this space.  
Thus, the space~$\cV_n(I)$ near~$E$ is a submanifold of~$\Gr_n(TM)$
of codimension $mn + (rn - r^{(1)})$.

Now let $(0)=V_0\subset V_1\subset\cdots\subset V_{n-1}\subset V_n = V$
be a flag in $V$, and, for each $k$, 
let $A_k\subset W\otimes V^*_k$ denote the image of~$A$ 
under the projection~$W\otimes V^*\to W\otimes V^*_k$.  
This defines a flag~$F = (E_0,E_1,\ldots,E_{n-1})$
in any~$E\in \cV_n(I)$ on which $\d x :E \to V$ 
is an isomorphism by letting $\d x(E_i) = V_i$.  
Inspection now shows that $c(E_i) = m + \dim A_i$, 
so Cartan's bound becomes
$$
mn + (rn - r^{(1)}) \ge c(E_0)+\cdots+c(E_{n-1}) 
 = mn + \sum_{i=1}^{n-1}\dim A_i\,,
$$
which, after rearrangement, becomes
$$
\dim A^{(1)} = r^{(1)} \le n\dim A - \sum_{i=1}^{n-1}\dim A_i\,.
$$
In particular, whether an integral element 
on which $\d x$ is independent has a regular flag 
(and hence is Cartan-ordinary) 
depends only on the subspace~$A\subset W\otimes V^*$.

The numbers $s_i(F,A) = \dim A_i - \dim A_{i-1}$ for $1\le i\le n$
are called the \emph{characters} of the flag~$F$ with respect to~$A$.  
In terms of the characters~$s_i(F,A)$, the above inequality becomes
$$
\dim A^{(1)} \le s_1(F,A) + 2\,s_2(F,A) + \cdots + n\,s_n(F,A)
$$
and equality holds if and only if $F$ is a regular flag and
the integral elements~$E\in\cV_n(I)$ on which $\d x:E\to V$ 
is a isomorphism are Cartan-ordinary.  When such a flag~$F$ exists,
the tableau~$A$ is said to be \emph{involutive}, and the
\emph{Cartan characters} of~$A$ are $s_i(A) = s_i(F,A)$
(computed with respect to any regular flag~$F$).

When $A$ is involutive, the Cartan-K\"ahler theorem
implies that the real-analytic integral manifolds of~$A$ exist 
and depend on $s_0=m$ constants, $s_1(A)$ functions of $1$ variable,
$s_2(A)$ functions of $2$ variables, etc.  

In particular, if one takes the Taylor series of the `general' solution
$f:V\to W$ of the equations forcing $f'(x)$ to lie in~$A$ for all $x$,
one gets
$$
f(x) = f_0 + f_1(x) + f_2(x) + \cdots + f_k(x) + \cdots
$$
where $f_k$ is a $W$-valued homogeneous polynomial 
of degree $k$ on~$V$ and hence lies in the subspace
$$
A^{(k-1)} = \bigl(W\otimes S^k(V^*)\bigr)\cap\bigl(A\otimes S^{k-1}(V^*)\bigr).
$$
which has dimension
$$
\dim A^{(k-1)} = \sum_{j=1}^n {{j+k-2}\choose{k-1}}s_j(A)\,,
$$
which is exactly what one would expect if $f$ 
were to be thought of as being comprised of $s_1(A)$ functions of $1$ variable,
$s_2(A)$ functions of $2$ variables, etc.

The concept of involutivity turns out to be fundamental, 
so it is worthwhile to examine how this is connected
with the notion of a Cartan-ordinary integral element in general.

Thus, fix~$E\in\cV_n(\cI)$, with $E\subset T_xM$.  
Let~$E^\perp\subset T^*_xM$ 
be the space of forms that vanish on~$E$, and note that the
ideal~$(E^\perp)\subset\Lambda(T^*_xM)$ generated by~$E^\perp$ 
consists of the forms that vanish on~$E\subset T_xM$. 
Let~$\cI_x\subset \Lambda(T^*_xM)$ 
denote the set of values of forms in~$\cI$ at the point~$x$,
Then $\cI_x$ is contained in~$(E^\perp)$ because $E$ is an integral
element of~$\cI$. Consider the quotient
$$
I_E = (\cI_x+(E^\perp)^2)/(E^\perp)^2 
\subseteq (E^\perp)/(E^\perp)^2\simeq E^\perp\otimes \Lambda(E^*).
$$
This $I_E\subset E^\perp\otimes\Lambda(E^*)$ 
should be thought of as the `linearization' of the ideal~$\cI$ at~$E$.
It generates an ideal~$(I_E)$ in the space of forms on~$T_xM/E\oplus E$ (whose dual space is $E^\perp\oplus E^*$) that has $0\oplus E\subset
T_xM/E\oplus E$ as an integral element.

If $E$ is, in addition, Cartan-ordinary, then it is not difficult
to show that $0\oplus E$ is a Cartan-ordinary integral element of
$(I_E)$, that a flag of the form $0\oplus E_i$ is regular for $0\oplus E$
if and only if $F = (E_i)$ is a regular flag for $E$, and that one
has equality of characters $s_i(0\oplus E) = s_i(E)$.

This motivates the following:  Given two vector spaces~$W$
and $V$ (of dimensions $m$, and $n$, respectively), a graded%
\footnote{This means that $I$ is the direct sum of its 
subspaces $I^q = I\cap \bigl(W^*{\otimes}\Lambda^q(V^*)\bigr)$.}
subspace $I\subset W^*\otimes \Lambda(V^*)$ 
is \emph{involutive} if $0\oplus V\subset W\oplus V$ 
is a Cartan-ordinary integral element 
of the ideal $(I)\subseteq\Lambda\bigl((W\oplus V)^*\bigr)$ 
generated by~$I$.
In this case, set $s_i(I) = s_i(0\oplus V)$ and let
$$
A_I=\left\{ f\in\Hom(V,W)\ |\ \Gamma_f \in \cV_n\bigl((I)\bigr)\ \right\}
\subset W\otimes V^*,
$$
where $\Gamma_f = \{ (f(x),x)\ |\ x\in V\ \} \subset W\oplus V$;
the subspace $A_I$ is said to be the \emph{tableau} of~$I$.  
When $I$ is involutive, $A_I$ is also involutive 
and its characters are
$$
s_i(A_I) = s_i(I) + s_{i+1}(I) + \cdots + s_n(I).
$$

\section{First Examples and Applications}

I will now give a basic set of examples
illustrating the concepts and applications of the Cartan-K\"ahler Theorem.
Some are intended just to help the reader gain familiarity with the concepts, 
while others will turn out to have significant applications.

\begin{example}[The Frobenius theorem]
Suppose that $\cI$ on~$M^{n+s}$ 
can be locally generated \emph{algebraically}
by~$s$ linearly independent $1$-forms~$\theta^1,\ldots,\theta^s$. 
In particular, since~$\cI$ is differentially closed, 
it follows that there are (local) $1$-forms~$\phi^a_b$ 
such that $\d\theta^a = \phi^a_b\w\theta^b$. 

In particular, there is a unique $n$-dimensional integral element
at each point~$x\in M$, 
namely the $n$-dimensional subspace $E_x\subset T_xM$ 
on which each of the $\theta^a$ vanish.  
Thus,~$\cV_n(\cI)\subset\Gr_n(TM)$ is simply a copy of~$M$, 
in fact, the image of a smooth section of the bundle~$\Gr_n(TM)$,
so it is a smooth manifold of dimension~$n{+}s$.  Meanwhile,
for any flag~$F=(E_0,\ldots,E_{n-1})$ in~$E_x$, 
one has $H(E_p) = E_x$, so $c(E_i) = s$ for $0\le i<n$.  
In particular, $s_0(F)=s$ and $s_i(F)=0$ for $0<i<n$. 
Since $\dim\cV_n(\cI) = n{+}s = \dim M + s_1 + 2s_2 + \cdots + ns_n$,
it follows that Cartan's bound is saturated, and all of the elements
of~$\cV_n(\cI)$ are Cartan-ordinary and all their flags are regular.

By the Cartan-K\"ahler Theorem, every $E_x$ is tangent to an integral
manifold of~$\cI$ and the local integral manifolds near~$E$ depend on
$s_0=s$ constants.

Now, in this particular case, there is another way to get the same result,
which is to use the Frobenius Theorem (which is even better 
since it applies in the smooth setting).
This Theorem says that, locally, it is possible 
to choose closed generators~$\theta^a=\d y^a$ 
for some functions~$y^1,\ldots, y^s$ 
that form part of a coordinate system~$x^1,\ldots,x^n,y^1,\ldots,y^s$.  
Then the local $n$-dimensional integral manifolds of~$\cI$ 
are the leaves defined by holding the $y^a$ constant,
so that the `general' local $n$-dimensional integral manifold 
depends on $s$ constants, in agreement with the prediction
of the Cartan-K\"ahler Theorem.
\end{example}

\begin{example}[A non-ordinary integral element]
Let $M=\bbR^3$, with coordinates $x,y,z$, 
and let $\cI$ be generated 
by the $2$-forms $\d x\w \d z$ and $\d y\w \d z$.  
Then the $2$-plane field defined by $\d z = 0$ 
consists of $2$-dimensional integral elements, 
and these are the only $2$-dimensional integral elements, 
so that $\cV_2(\cI)$ is a smooth $3$-manifold in $\Gr_2(TM)$.  
Since $\cI^1 = 0$, one has $c(E_0) = 0$
for all $E_0$.  Letting $E_1$ be spanned by $a\partial_x + b\partial_y$, 
where $(a,b)\not=0$, one finds that $H(E_1)$ has dimension $2$
and is defined by $\d z = 0$, so~$c(E_1) = 1$.  
Thus, $c(E_0)+c(E_1) = 1$ 
while the codimension of $\cV_2(\cI)$ in $\Gr_2(TM)$ is $2$. 
Thus, $E_2 = H(E_1)$ has no regular flag and hence is not Cartan-ordinary.

It may seem disappointing 
that the Cartan-K\"ahler Theorem does not apply 
to prove the existence of $2$-dimensional integral manifolds,
especially, since there evidently does exist an integral manifold
tangent to every $2$-dimensional integral element, namely, 
a horizontal plane $z=z_0$.

However, to see why one should not expect 
Cartan-K\"ahler to apply in this case,
consider a modification of this example 
got by instead considering the ideal~$\cI'$ 
generated by  $\d x\w \d z$ and $\d y\w (\d z - y\,\d x)$. 
The ideals~$\cI$ and $\cI'$ are algebraically equivalent at each point, 
and so one sees that, there is also a unique $2$-dimensional integral
element of~$\cI'$ through each point, namely the one that satisfies 
$\d z - y\,\d x = 0$.  Since the algebra of the polar equations 
is essentially the same for~$\cI'$ as it is for~$\cI$, 
these integral elements of~$\cI'$ also are not Cartan-ordinary, 
and this is good because there evidently 
are not any integral surfaces of the equation~$\d z - y\,\d x=0$.
\end{example}

\begin{example}[Lagrangian submanifolds]
Let $M = \bbR^{2n}$ and let $\cI$ be generated by the symplectic form
$$
\Omega = \d p_1\w\d x^1 +\cdots + \d p_n\w\d x^n.
$$
Then the $n$-plane $E$ spanned by the $\partial_{x^i}$ 
is an integral element of~$\cI$, 
and, if one takes the flag $F = (E_0,\ldots,E_{n-1})$
so that $E_i$ is spanned by the $\partial_{x^j}$ with $1\le j\le i$, 
then one computes that, for $0<i<n$, the polar space~$H(E_i)$ 
is the subspace defined by $\d p_1=\d p_2=\cdots=\d p_i=0$. 
Thus, $c(E_i) = i$.  

By Cartan's bound, $\cV_n(\cI)$ has codimension at least 
$$
C = 1 + 2 + \cdots + (n{-}1) = \tfrac12n(n{-}1) 
$$ 
in $\Gr_n(TM)$ near $E$.  
Meanwhile, any $\tilde E\in\Gr_n(TM)$ on which the $\d x^i$ 
are linearly independent will be defined by unique equations of the form
$$
\d p_i - s_{ij}(\tilde E)\,\d x^i = 0
$$
for some numbers $s_{ij}(\tilde E)$, and these functions $s_{ij}$,
together with the $x^i$ and the~$p_i$ define a local coordinate system
on an open subset of $\Gr_n(TM)$ 
that contains $E$ (which is defined by $s_{ij}(E)=0$).

By Cartan's Lemma, such an $\tilde E$ will be an integral element
of~$\cI$ if and only if $s_{ij}(E)-s_{ji}(\tilde E) = 0$.  
This is $\tfrac12n(n{-}1)$ independent equations on~$\tilde E$, 
so that $\cV_n(\cI)$  has codimension $\tfrac12n(n{-}1) = C$ 
in $\Gr_n(TM)$ near $E$.
Consequently, $E$ is Cartan-ordinary, and the flag $F$ is regular.

Of course, one already knows that Lagrangian manifolds exist, 
so this is not a surprise.  Note, however, 
that what the Cartan-K\"ahler theorem would say 
is that one can specify an integral manifold 
on which the~$x^i$ are independent uniquely by choosing $p_n$ 
to be an arbitrary function of the $x^i$, 
then choosing $p_{n-1}$ subject to the condition that its partial 
in the $x^n$-direction equals the partial
of $p_n$ in the $x^{n-1}$ direction (which determines $p_{n-1}$ 
up to the addition of a function of $x^1,\ldots x^{n-1}$), 
then choosing $p_{n-2}$ subject to the conditions that its partials
in the $x^n$- and $x^{n-1}$-directions are determined by those of $p_n$
and $p_{n-1}$ (which determines $p_{n-2}$ up to the addition 
of a function of $x^1,\ldots x^{n-2}$), etc.  
Thus, the integral manifolds are described 
by a choice of $1= s_n(E)$ function of $n$ variables, 
$1=s_{n-1}(E)$ function of $n{-}1$ variables, etc., 
in agreement with the general theory.

Of course, one can also specify a Lagrangian 
using only one function of $n$ variables 
simply by taking $p_i = \frac{\partial u}{\partial x^i}$
for some function $u$ of $x^1,\ldots,x^n$. 
However, for general~$\cI$, one cannot find such a formula 
that combines the `arbitrary functions' 
in the general Cartan-ordinary integral manifolds of~$\cI$ in this way.

Another way to interpret this `discrepancy' is to note that the Lagrangian
manifolds on which $\d x^1\w\cdots\w\d x^n\not=0$ are, by the above formula
put in correspondence with the arbitrary local function~$u$ of $n$ variables, 
which, via its graph~$\bigl(x^1,\ldots,x^n,u(x^1,\ldots,x^n)\bigr)$ 
is seen to be an integral manifold 
of the trivial ideal~$\cI = (0)$ on~$\bbR^{n+1}$, which has Cartan characters
$$
(s_0,s_1,\ldots,s_{n-1}, s_n) = (0,0,\ldots, 0, 1).
$$
Thus, this provides an example of the phenomenon that I mentioned earlier
of two different exterior differential systems describing (local) solutions
to the same problem.  Note that they have the same last nonzero character,
namely, $s_n=1$, while their lower characters are different.
\end{example}

\subsection{A version of Cartan's Third Theorem}
Suppose that $C^i_{jk}=-C^i_{kj}$ and $F^\alpha_i$ 
(with $1\le i,j,k\le n$ and $1\le \alpha\le s$) 
are given functions on~$\bbR^s$, 
and one wants to know whether or not there exist
linearly independent $1$-forms~$\omega^i$ on~$\bbR^n$ 
and a function $a = (a^\alpha):\bbR^n\to\bbR^s$ 
that satisfy the \emph{Cartan structure equations}
\begin{equation}\label{eq: CSE1}
\d\omega^i = -\tfrac12 C^i_{jk}(a)\,\omega^j\w\omega^k
\qquad\text{and}\qquad
\d a^\alpha = F^\alpha_i(a)\,\omega^i.
\end{equation}
Such a pair~$(a,\omega)$ will be said to be an \emph{augmented coframing}
satisfying the structure equations~\eqref{eq: CSE1}.

Applying the fundamental identity~$\d^2=0$ yields necessary conditions
in order for such a pair~$(a,\omega)$ to exist: 
One must have~$\d(C^i_{jk}(a)\,\omega^j\w\omega^k)=\d(\d\omega^i)=0$ 
and~$\d\bigl(F^\alpha_i(a)\,\omega^i\bigr)=\d(\d a^\alpha)=0$.
Expanding these identities using~\eqref{eq: CSE1} 
and the assumed independence of the~$\omega^i$ then yields that, 
if, for each~$u_0\in\bbR^s$, 
an augmented coframing~$(a,\omega)$ on some $n$-manifold~$M$
exists satisfying~\eqref{eq: CSE1} with $a(x)=u_0$ for some~$x\in M$, 
then one must have
\begin{equation}\label{eq: ddw=0}
 F^\alpha_j{\frac{\partial C^i_{kl}}{\partial u^\alpha}}
+F^\alpha_k{\frac{\partial C^i_{lj}}{\partial u^\alpha}}
+F^\alpha_l{\frac{\partial C^i_{jk}}{\partial u^\alpha}}
=\bigl(C^i_{mj}C^m_{kl}
         +C^i_{mk}C^m_{lj}+C^i_{ml}C^m_{jk}\bigr)
\end{equation}
and
\begin{equation}\label{eq: dda=0}
F^\beta_i{\frac{\partial F^\alpha_j}{\partial u^\beta}}
-F^\beta_j{\frac{\partial F^\alpha_i}{\partial u^\beta}}
=  C^l_{ij}\,F^\alpha_l.
\end{equation}

Cartan proved the converse statement~\cite{Cartan1904}:

\begin{theorem}[Cartan's Third Fundamental Theorem]\label{thm: CTFT}
Suppose that~$C^i_{jk}=-C^i_{kj}$ and~$F^\alpha_i$ 
are real-analytic functions on~$\bbR^s$ 
that satisfy~\eqref{eq: ddw=0} and \eqref{eq: dda=0}.
Then, for any $u_0\in\bbR^s$, 
there exists an augmented coframing~$(a,\omega)$ on~$\bbR^n$ 
that satisfies~\eqref{eq: CSE1} and has $a(0)=u_0$.  
{\upshape(}Moreover, any two such augmented coframings
agree on a neighborhood of~$0\in\bbR^n$ 
up to a diffeomorphism of~$\bbR^n$ that fixes~$0\in\bbR^n$.{\upshape)}
\end{theorem}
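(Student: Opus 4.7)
The strategy is to recast the structure equations \eqref{eq: CSE1} as an exterior differential system whose suitably transverse $n$-dimensional integral manifolds are precisely augmented coframings, then apply the Cartan-K\"ahler Theorem. I would work on $M = \bbR^n\times\bbR^s\times\GL(n,\bbR)$, with coordinates $(x^i, u^\alpha, p^i_j)$, and introduce the tautological 1-forms $\omega^i = p^i_j\,\d x^j$. Let $\cI$ be the ideal generated algebraically by the 1-forms $\theta^\alpha = \d u^\alpha - F^\alpha_i(u)\,\omega^i$ and the 2-forms $\Omega^i = \d\omega^i + \tfrac12 C^i_{jk}(u)\,\omega^j\w\omega^k$. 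Any $n$-dimensional integral manifold of $\cI$ on which $\d x^1\w\cdots\w\d x^n\ne 0$ is the graph of a map $x \mapsto (u(x), p(x))$, which on pullback to $\bbR^n$ yields an augmented coframing $(a,\omega)$ satisfying \eqref{eq: CSE1}.

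The first step is to verify that $\cI$ is a differential ideal. Substituting $\d u^\beta = \theta^\beta + F^\beta_j\omega^j$ and $\d\omega^k = \Omega^k - \tfrac12 C^k_{lm}\omega^l\w\omega^m$ into $\d\theta^\alpha$ yields $\d\theta^\alpha$ modulo $\cI$ as a 2-form in the $\omega^i$ whose vanishing is equivalent to \eqref{eq: dda=0}; the parallel computation for $\d\Omega^i$ reduces to \eqref{eq: ddw=0}. So the two identities hypothesized in the statement are precisely the $\d^2=0$ obstructions at the generator level, and $\cI$ is differentially closed.

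Next, pick the base point $m_0 = (0,u_0,I_n)\in M$ and exhibit an integral element $E \in \cV_n(\cI)$ on which $\d x^1,\ldots,\d x^n$ restrict to a basis. Such an $E$ is described by setting $\d u^\alpha|_E = F^\alpha_j(u_0)\,\d x^j$ and $\d p^j_l|_E = T^j_{lk}\,\d x^k$, where $\Omega^i|_E = 0$ fixes the skew part of $T^j_{lk}$ in $(l,k)$ as $\tfrac12 C^j_{lk}(u_0)$ but leaves the symmetric part free. Using the flag $F = (E_0, \ldots, E_{n-1})$ in $E$ with $E_i$ dual to $\d x^1,\ldots,\d x^i$, a direct polar-space calculation gives $c(E_0) = s$ and $c(E_i) = s + n i$ for $1 \le i \le n-1$, so that $\sum_{i=0}^{n-1} c(E_i) = ns + \tfrac12 n^2(n-1)$. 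On the other hand, a direct parameter count shows that $\cV_n(\cI)$ has codimension $sn + n\binom{n}{2} = ns + \tfrac12 n^2(n-1)$ in $\Gr_n(TM)$ near $E$, matching the Cartan bound. Hence Cartan's test is saturated and $E$ is Cartan-ordinary, with characters $(s_0, \ldots, s_n) = (s, n, n, \ldots, n)$.

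With $C$ and $F$ real-analytic, the Cartan-K\"ahler Theorem then produces a real-analytic integral manifold of $\cI$ through $m_0$ tangent to $E$; projecting to the $x$-coordinates yields the desired augmented coframing with $a(0) = u_0$. For the uniqueness clause, an analogous EDS argument on $\bbR^n\times\bbR^n$---using the 1-forms $\omega_1^i-\omega_2^i$ and the 0-forms $a_1^\alpha-a_2^\alpha$, whose differential closure follows from the same kinds of identities---produces a unique $n$-dimensional integral leaf through $(0,0)$ that is the graph of the required diffeomorphism fixing $0$. The principal obstacle is the involutivity verification in step three, where the polar-space sum must match the codimension of $\cV_n(\cI)$ exactly; this matching is what lets the algebraic consequences of \eqref{eq: ddw=0}--\eqref{eq: dda=0}, verified in step two, integrate to the geometric statement of the theorem.
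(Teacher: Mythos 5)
Your proposal is correct and follows essentially the same route as the paper: the same ideal on $\GL(n,\bbR)\times\bbR^n\times\bbR^s$ generated by $\theta^\alpha$ and $\Upsilon^i$, the same verification that \eqref{eq: ddw=0} and \eqref{eq: dda=0} make the ideal algebraically generated by these forms, the same flag with $c(E_i)=s+ni$, and the same codimension count $ns+\tfrac12n^2(n-1)$ saturating Cartan's bound before invoking Cartan--K\"ahler. The paper likewise defers the parenthetical uniqueness clause to Cartan's technique of the graph, exactly as you indicate.
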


\begin{proof}
Let $M = \GL(n,\bbR)\times \bbR^n\times \bbR^s$, and let $p:M\to\GL(n,\bbR)$,
$x:M\to\bbR^n$, and $u:M\to\bbR^s$ be the projections.  
Consider the ideal~$\cI$ generated on~$M$ by the $n$ $2$-forms
$$
\Upsilon^i = \d(p^i_j\,\d x^j) 
                 + \tfrac12 C^i_{jk}(u)(p^j_l\,\d x^l)\w(p^k_m\,\d x^m)
$$
and the $s$ $1$-forms
$$
\theta^\alpha = \d u^\alpha - F^\alpha_i(u)\,(p^i_j\,\d x^j).
$$
Note that one can write
$$
\Upsilon^i = \pi^i_j\w\d x^j
$$
for some $1$-forms $\pi^i_j = \d p^i_j + P^i_{jk}\, \d x^k$ for some
functions~$P^i_{jk}$ on~$M$ and that the forms $\pi^i_j$, $\d x^k$, and 
$\theta^\alpha$ define a coframing on~$M$, i.e., they are linearly independent
everywhere and span the cotangent space everywhere.

Now, the hypothesis that $\d^2 = 0$ be a formal consequence of the
structure equations (i.e., the equations~\eqref{eq: ddw=0} 
and~\eqref{eq: dda=0}) is easily seen to be equivalent to the equations 
$$
\d\Upsilon^i = 
\tfrac12 \frac{\partial C^i_{jk}}{\partial u^\alpha} \,
\theta^\alpha\w(p^j_l\,\d x^l)\w(p^k_m\,\d x^m)
+ C^i_{jk}\,\Upsilon^j\w(p^k_m\,\d x^m)
$$ 
and
$$
\d\theta^\alpha 
= \frac{\partial F^\alpha_{i}}{\partial u^\beta}\,
\theta^\beta\w(p^i_j\,\d x^j)
+ F^\alpha_i\,\Upsilon^i.
$$
Thus, these hypotheses imply that $\cI$ 
is generated \emph{algebraically} by the $\Upsilon^i$ 
and the $\theta^\alpha$.
This makes it easy to choose an integral element and compute the Cartan
characters:

Fix a point~$z\in M$ and let $E\subset T_zM$ be the $n$-dimensional
integral element defined by $\pi^i_j = \theta^\alpha = 0$.  Let $F$
be the flag in $E$ defined so that $E_i$ is also annihilated by
the $\d x^j$ for $j>i$.  Then one finds that $H(E_i)$
is defined by $\theta^\alpha = \pi^j_k = 0$ where $k\le i$, 
and hence that $c(E_i) = s + ni$ for $0\le i\le n{-}1$.  In particular, 
it follows that $\cV_n(\cI)$ must be contained in a submanifold $\Gr_n(TM)$
of codimension at least~$C = ns + \tfrac12n^2(n{-}1)$.  

Meanwhile, any $n$-plane $\tilde E$ on which the $\d x^i$ 
are linearly independent is specified 
by knowing the $ns +n^3$ numbers~$s^\alpha_i(\tilde E)$
and~$s^i_{jk}(\tilde E)$ such that $\tilde E$ satisfies
$$
\pi^i_j - s^i_{jk}(\tilde E)\, \d x^k
 = \theta^\alpha - s^\alpha_k(\tilde E),\d x^k = 0.
$$
The condition that such an $\tilde E$ be an integral element of~$\cI$
is then that $s^\alpha_k(\tilde E) = s^i_{jk}(\tilde E) - s^i_{kj}(\tilde E) 
= 0$, which is $ns + \tfrac12n^2(n{-}1)=C$ equations
on~$\tilde E$.  Thus, $E$ is Cartan-ordinary, and $F$ is a regular flag.

Now, since the functions $C^i_{jk}$ and $F^\alpha_i$ 
are assumed to be real-analytic, the Cartan-K\"ahler Theorem applies 
and one concludes that there 
is an integral manifold of~$\cI$ tangent to~$E$.  This integral
manifold is described by having the $p^i_j$ and the $u^\alpha$ 
be certain functions of the $x^1,\ldots, x^n$, 
say, $p^i_j = f^i_j(x)$ and $u^\alpha = a^\alpha(x)$.  
These then give the desired $(a^\alpha,\omega^i)
=\bigl(a^\alpha(x),f^i_j(x)\,\d x^j\bigr)$.
\end{proof}

\begin{remark}[Cartan's original theorem]
The result I have just proved%
\footnote{The proof in the text is Cartan's; 
I have merely simplified his proof as possible in this special case.}
is only a very special case of the theorem that Cartan proves 
in the first of his `infinite groups' papers~\cite{Cartan1904}.  
However, this version suffices for applications 
that I have in mind in these notes, 
and it is much easier to state than Cartan's full theorem.
\end{remark}

\begin{remark}[Uniqueness]
The general Cartan-ordinary integral of~$\cI$ 
depends on $n$ arbitrary functions of $n$ variables
(since the last nonzero character is $s_n=n$), 
but this is to be expected because,
given any integral as a (local) coframing on~$\bbR^n$, 
one can get others by simply pulling back 
by an arbitrary diffeomorphism of~$\bbR^n$.%
\footnote{Alternatively, one should think of the
Cartan-ordinary integral manifolds of~$\cI$ 
as giving a (local) augmented coframing~$(a,\omega)$ 
satisfying the structure equations~\eqref{eq: CSE1} \emph{plus}
a local coordinate system~$x = (x^i)$ on the domain of~$(a,\omega)$.}
To get uniqueness up to local diffeomorphism for data~$(a,\omega)$ 
in which $a$ takes on a specific value~$a_0\in\bbR^s$, 
one shows that two such solutions are locally equivalent 
by an application of Cartan's technique of the graph.

Note, by the way, that when $s=0$ 
(i.e., there are no functions~$a^\alpha$),
this result becomes Lie's Third Theorem 
giving the existence of a local Lie group for any given Lie algebra.
\end{remark}

\begin{remark}[Smoothness and Globalization]
While this treatment assumes real-analyticity, 
so that the Cartan-K\"ahler Theorem can be applied, 
it is now known that the theorem is true 
in the smooth category as well.  
The proof in the smooth case is not difficult, 
but requires a little more insight 
than this simple application of Cartan-K\"ahler.

The reader will probably also have noticed that nothing is
really used about the domain of the functions~$C^i_{jk}$
and~$F^\alpha_i$ other than that it is a smooth manifold of some
dimension~$s$.  This observation spurred the development of
a `globalized' version of Cartan's Theorem, which becomes
the subject of Lie algebroids, in which $\bbR^s$
is replaced by a smooth manifold~$A$.  For these details 
on these developments, as well as the smooth theory,
the reader should consult treatises devoted to these subjects,
but I will sketch the translation here to aid in comparison
with a somewhat generalized construction associated to a
variant of Cartan's Theorem that I will describe in the next
subsection.

Recall that a \emph{Lie algebroid} is a manifold~$A$ 
endowed with a vector bundle~$Y\to A$ of rank~$n$ 
whose space of sections~$\Gamma(Y)$ carries a Lie algebra structure
$$
\bigl\{,\bigl\}:\Gamma(Y)\times\Gamma(Y)\to\Gamma(Y)
$$ 
together with a bundle map~$\alpha:Y\to TA$ 
that induces a homomorphism of Lie algebras
on the spaces of sections%
\footnote{Here, $\Gamma(TA)$, the set of vector fields on~$A$,
is given its standard Lie algebra structure via the Lie bracket.}
and that satisfies 
the Leibnitz compatibility condition
\begin{equation}\label{eq: Leibnitzcomp}
\bigl\{U,fV\bigl\} = \alpha(U)f\,\,V + f\bigl\{U,V\bigl\}
\end{equation}
for all~$U,V\in\Gamma(Y)$ and $f\in C^\infty(A)$.

A \emph{realization} of~$\bigl(A,Y,\{,\},\alpha\bigr)$
is a triple~$(M,a,\omega)$, where~$M$ 
is an $n$-manifold, $a:M\to A$ is a (smooth) mapping,
and~$\omega:TM\to a^*Y$ is a vector bundle isomorphism,
such that~$\alpha\circ\omega = \d a:TM\to TA$ 
and such that $\omega$ induces an isomorphism of Lie algebras
on the space of sections of~$TM$ and $a^*Y$.

To see the translation from Cartan's language 
to that of Lie algebroids, 
start with the data of functions~$C^i_{jk}=-C^i_{kj}$ 
and $F^\alpha_i$ on~$A = \bbR^s$.
Set~$Y = A\times \bbR^n$ with a basis for sections~$U_i$
and set
$$
\bigl\{U_i,U_j\bigl\} = C^k_{ij}\,U_k
$$
and define $\alpha:Y\to TA = T\bbR^s$ by
$$
\alpha(U_i) = F^\alpha_i\,\frac{\partial\hfill}{\partial u^\alpha}.
$$
Then~\eqref{eq: ddw=0} and~\eqref{eq: dda=0} 
are precisely the equations necessary and sufficient
in order that~\eqref{eq: Leibnitzcomp} hold, 
that $\{,\}$ define a Lie bracket on the space of sections of~$Y$,
and that~$\alpha:Y\to TA$ induce a homomorphism of Lie algebras.

Moreover, an augmented coframing~$(a^\alpha,\omega^i)$ 
on a manifold~$M^n$ satisfies Cartan's structure equations 
if and only if, when one sets
$$
\omega = U_i\,\omega^i\,,
$$
and defines~$a:M\to\bbR^s$ to be~$a = (a^\alpha)$,
the data~$(M,a,\omega)$ is a realization in the above sense.

This approach to globalizing Cartan's theorem has been very 
fruitful, and the reader is encouraged to consult the literature
on Lie algebroids for more on this development.

However, it should be borne in mind that 
Cartan's original formulation in terms of
what I am calling `augmented coframings' 
turns out already to be very well suited 
for applications to differential geometry, 
as I hope to show in the discussion of examples below.  
\end{remark}

\subsection{Variants of Cartan's Third Theorem}

Cartan's Third Theorem is one of a number of existence
results that are all proved more or less the same way,
at least in the real-analytic category.  In this subsection,
I will give two such variants, 
and, in the following sections in the notes, 
I will illustrate their use 
in a range of differential geometry problems.

Throughout this first variant, the index ranges $1\le i,j,k\le n$, 
$1\le \alpha\le s$, and $1\le\rho,\sigma\le r$ will be assumed.

Suppose that $C^i_{jk}(u)=-C^i_{kj}(u)$ are given functions on~$\bbR^s$ 
while $F^\alpha_i(u,v)$ are given functions on~$\bbR^{s+r}$, 
and suppose that one wants to know whether or not there exist
linearly independent $1$-forms~$\omega^i$ on an $n$-manifold~$M$,
a function $a = (a^\alpha):M\to\bbR^s$,
and a function~$b = (b^\rho):M\to\bbR^r$ 
that satisfy these \emph{Cartan structure equations}
\begin{equation}\label{eq: CSE2}
\d\omega^i = -\tfrac12 C^i_{jk}(a)\,\omega^j\w\omega^k
\qquad\text{and}\qquad
\d a^\alpha = F^\alpha_i(a,b)\,\omega^i.
\end{equation}
Such a triple~$(a,b,\omega)$ on~$M$ 
will be said to be an \emph{augmented coframing} 
satisfying~\eqref{eq: CSE2}.  

Note that this is a diffeomorphism invariant notion, 
since, if~$f:N\to M$ is a diffeomorphism, 
then~$\bigl(f^*a,f^*b,f^*\omega\bigr)$ 
will be an augmented coframing on~$N$ that satisfies~\eqref{eq: CSE2}.   
In many geometric problems (see some examples in the next section),
one is interested in understanding the `general' augmented
coframing satisfying~\eqref{eq: CSE2} 
and one regards two such augmented coframings 
that differ by a diffeomorphism as equivalent.

One should think of the $b^\rho$ 
as `unconstrained' derivatives of the functions $a^\alpha$.  
Thus, this version of  Cartan's structure equations 
covers situations in the more typical case 
in which one does not have formulae for all of the derivatives 
of the geometric quantities that appear in the problem. 
Informally, one speaks of the functions~$b^\rho$ as `free derivatives'.

To understand necessary and sufficient conditions for such
augmented coframings to exist, one again wants to consider 
the consequences of the identity $\d^2 = 0$, but now, 
because of the free derivatives appearing in the structure equations,
one cannot simply expand this fundamental identity formally 
and arrive at complete necessary and sufficient conditions 
on the functions~$C$ and~$F$.  

Now, the equations $\d(\d\omega^i)=\d(C^i_{jk}(a)\,\omega^j\w\omega^k)=0$ 
do make good sense, 
so one should require that~$C$ and~$F$ at least satisfy
\begin{equation}\label{eq: ddw=0.2}
 F^\alpha_j{\frac{\partial C^i_{kl}}{\partial u^\alpha}}
+F^\alpha_k{\frac{\partial C^i_{lj}}{\partial u^\alpha}}
+F^\alpha_l{\frac{\partial C^i_{jk}}{\partial u^\alpha}}
=\bigl(C^i_{mj}C^m_{kl}
         +C^i_{mk}C^m_{lj}+C^i_{ml}C^m_{jk}\bigr).
\end{equation}
(Because the $F^\alpha_i$ contain the variables~$v^\rho$
while the right hand side of~\eqref{eq: ddw=0.2} does not, 
this equation places constraints on how the~$v^\rho$ 
can appear in the~$F^\alpha_j$.)

Meanwhile, expanding
$\d(\d a^\alpha) = \d\bigl(F^\alpha_i(a,b)\,\omega^i\bigr) = 0$
yields  
$$
0 = {\frac{\partial F^\alpha_i}{\partial v^\rho}}(a,b)\d b^\rho\w\omega^i
+\frac12\left(F^\beta_i(a,b)
{\frac{\partial F^\alpha_j}{\partial u^\beta}}(a,b)
-F^\beta_j(a,b){\frac{\partial F^\alpha_i}{\partial u^\beta}}(a,b)
- C^l_{ij}(a)\,F^\alpha_l(a,b)\right)\,\omega^i\w\omega^j\,,
$$
and the simplest way for these equations to be satisfiable
by some expression of the~$\d b^\rho$ in terms of the~$\omega^i$ 
would be for there to exist functions~$G^\rho_j$ on $\bbR^{s+r}$ 
such that
\begin{equation}\label{eq: Grelns}
F^\beta_i{\frac{\partial F^\alpha_j}{\partial u^\beta}}
-F^\beta_j{\frac{\partial F^\alpha_i}{\partial u^\beta}}
- C^l_{ij}\,F^\alpha_l
=  {\frac{\partial F^\alpha_i}{\partial v^\rho}}G^\rho_j
  -{\frac{\partial F^\alpha_j}{\partial v^\rho}}G^\rho_i\,,
\end{equation}
for then the above equations can be written in the form
$$
0 = {\frac{\partial F^\alpha_i}{\partial v^\rho}}(a,b)
\left(\d b^\rho - G^\rho_j(a,b)\omega^j\right)\w\omega^i.
$$

The conditions~\eqref{eq: ddw=0.2} and~\eqref{eq: Grelns} 
will at least ensure that there are no obvious
incompatibilities derivable by taking the exterior derivatives
of the structure equations.  However, they aren't enough
to guarantee that there won't be higher order incompatibilities.
To rule this out, it will be necessary to impose conditions on
how the `free derivatives' $v^\rho$ appear in the functions~$F^\alpha_i$.
Let~$u_1,\ldots,u_s$ be a basis of $\bbR^s$, and let $x^1,\ldots x^n$
be a basis of the dual of~$\bbR^n$.  
Let~$A(u,v)\subset\Hom(\bbR^n,\bbR^s)$
denote the subspace (i.e., tableau) spanned by the $r$ elements
\begin{equation}\label{eq: Agenerators}
{\frac{\partial F^\alpha_i}{\partial v^\rho}}(u,v)\,u_\alpha\otimes v^i,
\qquad 1\le \rho\le r.
\end{equation}
This~$A(u,v)$ is known as the `tableau of free derivatives' 
of the structure equations at the point~$(u,v)\in\bbR^{s+r}$.

Here is a useful variant%
\footnote{Note that the proof is very closely patterned 
on Cartan's proof of~Theorem~\ref{thm: CTFT}.} 
of Theorem~\ref{thm: CTFT}.

\begin{theorem}\label{thm: CartanThmVar}
Suppose that real analytic functions $C^i_{jk}=-C^i_{kj}$ on~$\bbR^s$
and $F^\alpha_i$ on~$\bbR^{s+r}$ are given satisfying~\eqref{eq: ddw=0.2}
and that there exist real analytic functions~$G^\rho_i$ on~$\bbR^{s+r}$
that satisfy~\eqref{eq: Grelns}.  
Finally, suppose that the tableaux $A(u,v)$ 
defined by~\eqref{eq: Agenerators} 
have dimension~$r$ and are involutive, with Cartan characters $s_i$ 
{\upshape(}$1\le i\le n${\upshape)} for all~$(u,v)\in\bbR^{s+r}$.  
Then, for any $(u_0,v_0)\in\bbR^{s+r}$
there exists an augmented coframing~$(a,b,\omega)$ 
on an open neighborhood~$V$ of~$0$ in~$\bbR^n$
that satisfies~\eqref{eq: CSE2}
and has $\bigl(a(0),b(0)\bigr) = (u_0,v_0)$.
\end{theorem}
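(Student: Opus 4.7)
The plan is to mimic the proof of Theorem~\ref{thm: CTFT}, enlarging the ambient manifold to accommodate the free derivatives~$v^\rho$ and using the compatibility hypotheses to reduce to an algebraically generated ideal. Set $M = \GL(n,\bbR)\times\bbR^n\times\bbR^s\times\bbR^r$, with projections $p,x,u,v$, and form the tautological forms $\omega^i = p^i_j\,\d x^j$ on $M$. Let $\cI$ be the exterior differential system generated by
\[
\theta^\alpha = \d u^\alpha - F^\alpha_i(u,v)\,\omega^i
\qquad\text{and}\qquad
\Upsilon^i = \d\omega^i + \tfrac12 C^i_{jk}(u)\,\omega^j\w\omega^k.
\]
Any $n$-dimensional integral manifold of~$\cI$ on which $\d x^1\w\cdots\w\d x^n\ne 0$ will, by restricting $p^i_j, u^\alpha, v^\rho$ to real-analytic functions of~$x$, yield an augmented coframing~$(a,b,\omega)$ satisfying~\eqref{eq: CSE2}, and the basepoint can be chosen so that $a(0)=u_0$, $b(0)=v_0$.

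Next I would identify a convenient algebraic set of generators for~$\cI$. Write $\Upsilon^i = \pi^i_j\w\d x^j$ for suitable 1-forms $\pi^i_j = \d p^i_j + P^i_{jk}\,\d x^k$ as in the proof of Theorem~\ref{thm: CTFT}; the hypothesis~\eqref{eq: ddw=0.2} then forces $\d\Upsilon^i$ to lie in the algebraic ideal generated by $\theta^\alpha$ and $\Upsilon^j$. For $\d\theta^\alpha$ the free derivatives produce an extra term, but a direct calculation using~\eqref{eq: Grelns} yields, with $\pi^\rho := \d v^\rho - G^\rho_j(u,v)\,\omega^j$,
\[
\d\theta^\alpha \equiv \Psi^\alpha := -\frac{\partial F^\alpha_i}{\partial v^\rho}\,\pi^\rho\w\omega^i \pmod{\theta^\beta,\,\Upsilon^j}.
\]
Hence $\cI$ is algebraically generated by $\theta^\alpha$, $\Upsilon^i$, and $\Psi^\alpha$, and the forms $\d x^k,\theta^\alpha,\pi^i_j,\pi^\rho$ constitute a coframing on~$M$. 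Any $n$-plane $\tilde E$ on which the $\d x^k$ are independent can therefore be parametrized by numbers $s^i_{jk}, s^\alpha_k, t^\rho_k$ with $\pi^i_j \equiv s^i_{jk}\,\d x^k$, $\theta^\alpha \equiv s^\alpha_k\,\d x^k$, $\pi^\rho \equiv t^\rho_k\,\d x^k$ on~$\tilde E$.

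The three generator families then impose three disjoint integral-element conditions: $\theta^\alpha|_{\tilde E}=0$ forces $s^\alpha_k=0$ ($ns$ equations), $\Upsilon^i|_{\tilde E}=0$ forces $s^i_{jk}=s^i_{kj}$ ($\tfrac12 n^2(n{-}1)$ equations), and $\Psi^\alpha|_{\tilde E}=0$ requires $\tfrac{\partial F^\alpha_i}{\partial v^\rho}\,t^\rho_k$ to be symmetric in $i,k$. By the very definition~\eqref{eq: Agenerators} of the tableau $A(u,v)$, this last symmetry is exactly the statement that the tensor formed from the $t^\rho_k$ lies in the first prolongation $A(u,v)^{(1)}$. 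Thus, near any such integral element~$E$, the variety $\cV_n(\cI)$ has codimension
\[
ns + \tfrac12 n^2(n-1) + \bigl(nr - \dim A(u,v)^{(1)}\bigr)
\]
in $\Gr_n(TM)$.

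The main obstacle is the last step: saturating Cartan's bound so that $E$ is Cartan-ordinary. Take the flag $F=(E_0,\ldots,E_{n-1})$ in~$E$ with $E_k$ annihilated by $\d x^j$ for $j>k$. A computation of polar equations at each $E_k$, patterned on the proof of Theorem~\ref{thm: CTFT} with an extra contribution from $\Psi^\alpha$, shows that $\theta^\alpha$ contributes $s$ to $c(E_k)$, $\Upsilon^i$ contributes $nk$, and $\Psi^\alpha$ contributes $\dim A_k(u,v)$, where $A_k(u,v)$ is the image of $A(u,v)$ under projection to $W\otimes V_k^*$ relative to the flag $(0)\subset V_1\subset\cdots\subset V_{n-1}\subset\bbR^n$ dual to the~$\d x^j$. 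Summing and invoking involutivity of $A(u,v)$ via the identity $nr-\dim A(u,v)^{(1)} = \sum_{k=0}^{n-1}\dim A_k(u,v)$ from Section~\ref{ssec: InvTab} gives $\sum_k c(E_k)$ equal to the codimension above, so $F$ is regular and $E$ is Cartan-ordinary. Real-analyticity of~$C,F,G$ lets us apply the Cartan-K\"ahler Theorem to produce an integral manifold of~$\cI$ tangent to~$E$, and reading off $u^\alpha,v^\rho,p^i_j$ as functions of $x^1,\ldots,x^n$ on this manifold yields the desired augmented coframing.
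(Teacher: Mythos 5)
Your proposal follows the paper's own proof essentially step for step: same ambient manifold $\GL(n,\bbR)\times\bbR^n\times\bbR^s\times\bbR^r$, same generators $\theta^\alpha$, $\Upsilon^i$, the same use of \eqref{eq: ddw=0.2} and \eqref{eq: Grelns} to show $\cI$ is algebraically generated once the $2$-forms $\Psi^\alpha$ (the paper's $\Theta^\alpha$, up to sign) are adjoined, and the same codimension count $ns+\tfrac12n^2(n{-}1)+(nr-\dim A(u,v)^{(1)})$ matched against $\sum_k c(E_k)$. The one point you elide is that the identity $nr-\dim A(u,v)^{(1)}=\sum_{k=0}^{n-1}\dim A_k(u,v)$ holds only when the flag $(V_k)$ dual to the $\d x^j$ is a \emph{regular} flag of the tableau $A(u_0,v_0)$ (for a non-regular flag one gets a strict inequality and Cartan's test is inconclusive for that flag); the paper handles this by first choosing a regular flag of the tableau, which exists by the involutivity hypothesis, and then rotating the $x^i$ so that the coordinate flag realizes it. Adding that one sentence makes your argument complete and identical in substance to the paper's.
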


\begin{proof}
Let $M = \GL(n,\bbR)\times \bbR^n\times \bbR^s\times\bbR^r$, 
and let $p:M\to\GL(n,\bbR)$,
$x:M\to\bbR^n$, $u:M\to\bbR^s$, and $v:M\to\bbR^r$ be the projections.  
Consider the ideal~$\cI$ generated on~$M$ by the $n$ $2$-forms
$$
\Upsilon^i = \d(p^i_j\,\d x^j) 
                 + \tfrac12 C^i_{jk}(u)(p^j_l\,\d x^l)\w(p^k_m\,\d x^m)
$$
and the $s$ $1$-forms
$$
\theta^\alpha = \d u^\alpha - F^\alpha_i(u,v)\,(p^i_j\,\d x^j).
$$
Note that, as in the proof of Theorem~\ref{thm: CTFT}, one can write
$$
\Upsilon^i = \pi^i_j\w\d x^j
$$
for some $1$-forms $\pi^i_j = \d p^i_j + P^i_{jk}\, \d x^k$ for some
functions~$P^i_{jk}$ on~$M$ and that the forms $\pi^i_j$, $\d x^k$, 
$\theta^\alpha$, together with $\beta^\rho=\d b^\rho- G^\rho_i(p^i_j\,\d x^j)$ define a coframing on~$M$, 
i.e., they are linearly independent everywhere 
and span the cotangent space everywhere.

Now, the hypotheses of the theorem imply that 
$$
\d\Upsilon^i = 
\tfrac12 \frac{\partial C^i_{jk}(u)}{\partial a^\alpha} \,
\theta^\alpha\w(p^j_l\,\d x^l)\w(p^k_m\,\d x^m)
+ C^i_{jk}(u)\,\Upsilon^j\w(p^k_m\,\d x^m)
$$ 
while
$$
\d\theta^\alpha 
= {\frac{\partial F^\alpha_i}{\partial b^\rho}}
\beta^\rho\w(p^i_j\,\d x^j)
+\frac{\partial F^\alpha_{i}(u)}{\partial a^\beta}\,
\theta^\beta\w(p^i_j\,\d x^j)
+ F^\alpha_i\,\Upsilon^i.
$$
Thus, $\cI$ is generated \emph{algebraically} 
by the $\Upsilon^i$, the $\theta^\alpha$, and the $2$-forms
$$
\Theta^\alpha = {\frac{\partial F^\alpha_i}{\partial b^\rho}}
\beta^\rho\w(p^i_j\,\d x^j).
$$
This makes it easy to choose an integral element and compute the Cartan
characters:

Fix a point~$z=(I_n,0,u_0,v_0)\in M$, 
and let $E\subset T_zM$ be the $n$-dimensional
integral element defined by $\pi^i_j = \theta^\alpha = \beta^\rho=0$. 

Choose a regular flag for the tableau~$A(u_0,v_0)$ 
(which, by hypothesis, exists).  
By rotating the $x^i$ if necessary, one can assume
that the flag~$F$ in $E$ defined so that $E_i$ 
is also annihilated by the $\d x^j$ for $j>i$ 
is such a regular flag.  
Then one finds that $H(E_i)$ is defined by 
$$
\theta^\alpha = \pi^j_k 
= {\frac{\partial F^\alpha_k}{\partial b^\rho}}\beta^\rho = 0
$$ 
where $k\le i$, so $c(E_i)
= s+ ni + \dim A(u_0,v_0)_i =  s + ni + s_1+\cdots +s_i$ 
for $0\le i\le n{-}1$.  

Meanwhile, any $n$-plane~$E'\in\Gr_n(TM)$ on which the $\d x^i$
are independent will be defined by equations of the form
$$
\theta^\alpha - q^\alpha_i(E')\,\d x^i
= \pi^l_k - q^l_{ki}(E')\,\d x^i
= \beta^\rho - q^\rho_i(E')\,(p^i_j\,\d x^j) = 0
$$
for some numbers~$q^\alpha_i(E'),q^l_{ki}(E'),q^\rho_i(E')$.
The conditions that~$E'$ be an integral element of~$\cI$
then imply that
$$
q^\alpha_i(E') = q^l_{ki}(E')-q^l_{ik}(E') 
=  {\frac{\partial F^\alpha_i}{\partial b^\rho}}(u,v) q^\rho_j(E')
 - {\frac{\partial F^\alpha_j}{\partial b^\rho}}(u,v) q^\rho_i(E') =0,
$$
and, by the hypothesis that~$F = (E_i)$ is a regular flag 
for the tableaux~$A(u_0,v_0)$ (and hence is also regular for~$A(u,v)$ 
for $(u,v)$ near~$(u_0,v_0)$), it follows that this is
$$
c(E_0) + c(E_1) +\cdots + c(E_{n-1}) 
= ns+\tfrac12n^2(n{+}1) + (n{-}1)s_1 + (n{-}2)s_2 + \cdots + s_{n-1}
$$
equations on the quantities $q^\alpha_i(E'),q^l_{ki}(E'),q^\rho_i(E')$.
Thus, Cartan's bound is saturated, and the flag~$F$ is regular for~$E$.%
\footnote{Essentially, the involutive tableau~$A(u_0,v_0)$ 
is being combined with a tableau 
already shown to be involutive in the proof of Theorem~\ref{thm: CTFT},
one for which \emph{every} flag is regular.  Perhaps, I should also 
remind the reader that the $s_i$ are the characters of the tableaux~$A(u,v)$
and \emph{not} of the ideal~$\cI$ constructed above.  In fact, 
one has $s_0(\cI) = s$ and $s_i(\cI) = s_i + n$ for~$1\le i\le n$.}

The ideal~$\cI$ is real analytic, 
so the Cartan-K\"ahler Theorem applies, and one concludes that there 
is an integral manifold of~$\cI$ tangent to~$E$.  This integral
manifold is described by having the $p^i_j$, the $u^\alpha$,
and the $v^\rho$ be functions of the $x^1,\ldots, x^n$, 
say, $p^i_j = f^i_j(x)$ and $u^\alpha = a^\alpha(x)$ and $v^\rho=b^\rho(x)$.  
These then give the desired
augmented coframing~$(a^\alpha,b^\rho,\omega^i)
=\bigl(a^\alpha(x),\ b^\rho(x),\ f^i_j(x)\,\d x^j\bigr)$.
\end{proof}

\begin{remark}[Generality]
Theorem~\ref{thm: CartanThmVar} 
as stated only gives existence for specified $(u_0,v_0)$, 
but, as will be seen, the (local) augmented coframings 
that satisfy the structure equations
depend (modulo diffeomorphism) on $s$ constants, 
$s_1$ functions of $1$ variable, $s_2$ functions of $2$ variables, etc.,
but to make precise sense of this, 
I will need to discuss \emph{prolongation},
which comes in the next section.
\end{remark}

\begin{remark}[Globalization]\label{rem: globalCTV}
Just as in the case of Theorem~\ref{thm: CTFT}, 
which has a modern formulation in terms of Lie algebroids, 
there is a `global' version of Theorem~\ref{thm: CartanThmVar}.%
\footnote{I will not actually need this formulation in these notes,
but since there were questions about this during the lectures, 
I will briefly describe it here.}

The appropriate global data structure, 
$\bigl(A,B,\pi,Y,\{,\},\beta\bigr)$, 
starts with two manifolds,~$A$ of dimension~$s$
and~$B$ of dimension~$r+s$, and a submersion~$\pi:B\to A$.
For notational convenience, let~$K = \ker\pi'\subset TB$,
and let~$Q = TB/K$ be the quotient bundle over~$B$.
For a vector field~$X$ on~$B$, let~$X_K$ (i.e., `$X$ modulo~$K$') 
denote the corresponding section of~$Q$. 

Next, the data structure includes a vector bundle~$Y\to A$ of rank~$n$,
and a Lie algebra structure~$\{,\}$ 
on the space~$C^\infty(Y)$ of sections of~$Y$ over~$A$.
For~$U\in C^\infty(Y)$, let~$U^\pi\in C^\infty(\pi^*Y)$
denote the pullback section of the pullback bundle over~$B$,
i.e., $U^\pi(b) = U\bigl(\pi(b)\bigr)$.

Finally, the data includes a bundle map~$\beta:\pi^*Y\to TB$,
that satisfies
\begin{equation}\label{eq: weakLie}
\beta\bigl(\{U,\,V\}^\pi\bigr)_K
= \bigl[\beta(U^\pi),\beta(V^\pi)\bigr]_K
\end{equation}
and the requirement that there exist an anti-symmetric, bilinear product%
\footnote{N.B.: It is easy to see that there is at most one
such product~$\{\!\{,\}\!\}$ satisfying~\eqref{eq: weakcompat}. 
In general, this `extended' product 
is \emph{not} a Lie algebra structure on~$C^\infty(\pi^*Y)$.}
$\{\!\{,\}\!\}$ on $C^\infty(\pi^*Y)$ that satisfies
the compatibility condition
\begin{equation}\label{eq: weakcompat}
\{\!\{U^\pi,\,f\,V^\pi\}\!\} 
= \bigl(\beta(U^\pi)f\bigr)\ V^\pi + f\,\{U, V\}^\pi
\end{equation}
for~$U,V\in C^\infty(Y)$ and~$f\in C^\infty(B)$.

A \emph{realization} 
of the data structure~$\bigl(A,B,\pi,Y,\{,\},\beta\bigr)$
is a triple~$(M,b,\omega)$, where~$M$ is an $n$-manifold, 
$b:M\to B$ is a smooth mapping, 
and $\omega:TM\to (\pi{\circ}b)^*Y$ is an isomorphism of bundles 
that induces an isomorphism of Lie algebras 
on the appropriate spaces of sections
and that satisfies~$\d (\pi{\circ}b)=\pi'\circ\beta\circ\omega$.

(Note that if~$\pi:B\to A$ is a diffeomorphim (e.g., $r=0$),
then the data $\bigl(A,Y,\{,\},\beta\bigr)$ 
defines a Lie algebroid, 
and the notion of a realization is the standard one.)

Now, there is a map~$\tau:K\to Q\otimes (\pi^*Y)^*$ of~$B$-bundles, 
uniquely determined by the condition that it satisfy
$$
\tau(X)(U^\pi) = [X,\beta(U^\pi)]_K
$$
for any~$X\in\Gamma(K)$ and $U\in C^\infty(Y)$.  
One says that the data~$\bigl(A,B,\pi,Y,\{,\},\beta\bigr)$ 
is \emph{nondegenerate} if~$\tau$ injective, and, further,  
that it is \emph{{\upshape{(}}uniformly{\upshape{)}} involutive}
if $\tau(K)_b\subset Q_b\otimes (\pi^*Y)^*_b$ 
is an involutive tableau for all~$b\in B$ 
(and the Cartan characters~$s_i\bigl(\tau(K)_b)$ 
are constant, independent of~$b\in B$).

Then Theorem~\ref{thm: CartanThmVar} asserts the local existence
of realizations~$(M,b,\omega)$ of uniformly involutive, nondegenerate 
real analytic data structures~$\bigl(A,B,\pi,Y,\{,\},\beta\bigr)$
with~$b:M\to B$ taking any specified value~$b_0\in B$.

To see the translation 
from the notation of Theorem~\ref{thm: CartanThmVar} to this
`global' formulation, let~$A = \bbR^s$ (with coordinates~$u^\alpha)$, 
let $B =\bbR^{s+r}$ (with coordinates $u^\alpha$ and $v^\rho$),
let~$\pi:\bbR^{s+r}\to\bbR^s$ be the projection on the first $s$
coordinates, let~$Y = \bbR^s\times\bbR^n$ (with the standard
basis of sections~$U_i$), let
$$
\{U_i,U_j\} = C^k_{ij}(u)\,U_k\,,
$$
and let
$$
\beta(U^\pi_i) = F^\alpha_i(u,v)\frac{\partial\hfill}{\partial u^\alpha}
                +G^\rho_i(u,v)\frac{\partial\hfill}{\partial v^\rho}.
$$
The reader can now verify that~\eqref{eq: ddw=0.2} and~\eqref{eq: Grelns}
are the necessary and sufficient conditions 
that $\{,\}$ define a Lie bracket on~$C^\infty(Y)$, 
that~\eqref{eq: weakLie} hold, 
and that there exists an extension $\{\!\{,\}\!\}$ 
of $\{,\}$ to sections of~$C^\infty(\pi^*Y)$ 
that satisfies~\eqref{eq: weakcompat}.  

(I should point out that this `global' formulation is not perfect, 
because, ideally, 
one should only have to specify the functions~$G^\rho_i$ 
up to a section of the prolongation of the tableau bundle, 
i.e., one should regard 
two such structures~$\bigl(A,B,\pi,Y,\{,\},\beta\bigr)$ 
and $\bigl(A,B,\pi,Y,\{,\},\tilde\beta\bigr)$ 
as the same if the difference~$\delta\beta=\tilde\beta{-}\beta$,
which is a section of~$TB\otimes(\pi^*Y)^*$,
is actually a section of the kernel~$K^{(1)}$ of the composition
$$
K\otimes (\pi^*Y)^* 
\buildrel\tau\otimes\mathrm{id}\over\longrightarrow
Q\otimes (\pi^*Y)^*\otimes (\pi^*Y)^*
\to Q\otimes\Lambda^2\bigl((\pi^*Y)^*\bigr).
$$
Thus, one should probably formulate the data structure 
with the notion of nondegeneracy built into the axioms
and with~$\beta$ taking values 
in the quotient bundle~$(TB\otimes(\pi^*Y)^*)/K^{(1)}$
instead of in~$TB\otimes(\pi^*Y)^*$.
However, this is turns out to be awkward,
as checking that the axioms even make sense becomes cumbersome.)

While this `global' formulation may be more satisfying
than the `coordinate' formulation in Theorem~\ref{thm: CartanThmVar}, 
one should bear in mind that there is little (and, more often
than not, no) hope of proving the global realization theorems 
that one has in the more familiar case of Lie algebroids.  
For the general such structure, 
there is no obvious notion of completeness of a realization 
and there is also no obvious way to `classify' 
even the germs of realizations up to diffeomorphism.  
(However, there is a way to test two such germs for diffeomorphism
equivalence, at least in the real-analytic category.  
I will say more about this in Remark~\ref{rem: diffequivCTV}.)
\end{remark}

\begin{remark}[Local equivalence of realizations]
\label{rem: diffequivCTV}
The reader may be wondering how one distinguishes two realizations
of the data in Theorem~\ref{thm: CartanThmVar} up to diffeomorphism.
After all, as Cartan proved, given 
two augmented coframings~$(M,a,\omega)$ and $(\bar M,\bar a,\bar\omega)$
satisfying~\eqref{eq: CSE1} and points~$x\in M$ and~$\bar x\in \bar M$
such that~$a(x) = \bar a(\bar x)$, 
there will exist an $x$-neighborhood~$U\subset M$, 
an $\bar x$-neighborhood~$\bar U\subset \bar M$,
and a diffeomorphism~$f:\bar U\to U$
such that~$(\bar a,\bar\omega)=(f^*a, f^*\omega)$
and~$f(\bar x) = x$.

In contrast, for augmented coframings~$(M,a,b,\omega)$ 
and~$(\bar M, \bar a,\bar b,\bar\omega)$ satisfying~\eqref{eq: CSE2},
having points~$x\in M$ and $\bar x\in\bar M$ 
with~$\bigl(a(x),b(x)\bigr) = \bigl(\bar a(\bar x), \bar b(\bar x)\bigr)$
is not sufficient to imply that there is a diffeomorphism~$f:\bar U\to U$ 
for some $x$-neighborhood~$U$ and $\bar x$-neighborhood~$\bar U$
such that~$(\bar a,\bar b,\bar\omega) = (f^*a, f^*b, f^*\omega)$.

A sufficient condition 
(due, of course, to Cartan~\cite{Cartan1937})
for local diffeomorphism equivalence
does exist in this more general case
but it is more subtle.

An augmented coframing~$(a,b,\omega)$ on~$M^n$ 
satisfying~\eqref{eq: CSE2},
is \emph{regular of rank $p$ at~$x\in M$} 
if there is an $x$-neighborhood~$U\subset M$, 
a smooth submersion~$h:U\to\bbR^p$, 
and a smooth map~$(A,B):h(U)\to\bbR^{s+r}$ 
such that~$A:h(U)\to\bbR^s$ is a smooth embedding
and such that~$(a,b) = (A{\circ}h,B{\circ}h)$ holds on~$U$.
Note, in particular, 
that this implies that the image~$(a,b)(U)\subset\bbR^{s+r}$ 
is a smoothly embedded $p$-dimensional submanifold 
that is a graph over its projection~$a(U)\subset\bbR^s$ 
(also a smoothly embedded $p$-dimensional submanifold).
Equivalently,~$(a,b,\omega)$ is regular of rank~$p$ at~$x\in M$ 
if some $p$ of the functions~$a^\alpha$ 
have independent differentials at~$x$ 
and, moreover, on some $x$-neighborhood $U\subset M$, 
all of the other~$a^\alpha$ \emph{and} all of the~$b^\rho$ 
can be expressed as smooth functions of those $p$ independent functions.
For an augmented coframing~$(a,b,\omega)$ satisfying~\eqref{eq: CSE2}, being regular of rank~$p$ at a point~$x\in M$ 
is a diffeomorphism-invariant condition.
  
Cartan showed that, \emph{if} $(M,a,b,\omega)$ 
and~$(\bar M,\bar a,\bar b,\bar\omega)$ satisfy~\eqref{eq: CSE2}, 
are regular of rank~$p$ at points $x\in M$ and $\bar x\in \bar M$ 
with $\bigl(a(x),b(x)\bigr) = \bigl(\bar a(\bar x),\bar b(\bar x)\bigr)$,
and there are an $x$-neighborhood~$U\subset M$ 
and $\bar x$-neighborhood~$\bar U\subset \bar M$
such that~$(a,b)(U)$ and $(\bar a,\bar b)(\bar U)$ 
are the same $p$-dimensional submanifold of~$\bbR^{r+s}$, 
\emph{then}, after possibly shrinking~$U$ and~$\bar U$, 
there exists a diffeomorphism~$f:\bar U\to U$ 
such that $(\bar a,\bar b,\bar\omega)=(f^*a, f^*b, f^*\omega)$
and $f(\bar x) = x$.

The reader should have no trouble rephrasing Cartan's sufficient
condition in a form suitable for the `global data structure' version
described in Remark~\ref{rem: globalCTV}.  (The reader may feel that
the hypotheses of Cartan's equivalence theorem are absurdly strong,
but, without knowing more 
about a specific set of structure equations~\eqref{eq: CSE2}, 
it is not possible to weaken these hypotheses in any significant way
and still get the conclusion of local equivalence, 
as examples show.)
\end{remark}

I conclude this subsection with another useful variant of Cartan's
Third Theorem.  Let $V$ be a vector space of dimension~$n$.  
For each $V$-valued coframing~$\omega:TM\to V$ 
on an $n$-manifold~$M$, 
there will be a unique function~$C:M\to V\otimes\Lambda^2(V^*)$,
the \emph{structure function} of~$\omega$, such that
\begin{equation}\label{eq: domegastreqs}
\d\omega = -\tfrac12C(\omega\w\omega).
\end{equation}
Given a basis $v_i$ of~$V$ with dual basis~$v^i$, 
one has~$\omega=v_i\omega^i$ 
and~$C=\tfrac12C^i_{jk}v_i\otimes{v^j{\w}v^k}$,
and \eqref{eq: domegastreqs} takes 
the familiar form~$\d\omega^i = -\tfrac12 C^i_{jk}\,\omega^j\w\omega^k$.

Now, let~$A\subset V\otimes\Lambda^2(V^*)$ be a submanifold.%
\footnote{In most applications, $A$ will be an affine subspace 
of~$V\otimes\Lambda^2(V^*)$, but the extra generality 
of allowing $A$ to be a submanifold is frequently useful.} 
A $V$-valued coframing~$\omega:TM\to V$ 
will be said to be of \emph{type~$A$} 
if its structure function~$C:M\to V\otimes\Lambda^2(V^*)$ 
takes values in~$A$.  The goal is to determine the generality 
of the space of (local) $V$-valued coframings~$\omega$ 
of type~$A$ when two such that differ by a diffeomorphism of~$M$
are regarded as equivalent.

For example, if~$A$ 
consists of a single point~$a_0=\tfrac12c^i_{jk}\,v_i\otimes{v^j{\w}v^k}$,
then Lie's Theorem asserts that a necessary and sufficient condition
that such a coframing exist is that $J(a_0) = 0$, 
where~$J:V\otimes\Lambda^2(V^*)\to V\otimes \Lambda^3(V^*)$ 
is the quadratic mapping (sometimes called the \emph{Jacobi mapping}) 
that one gets by squaring, contracting, and skewsymmetrizing:
$$
V{\otimes}\Lambda^2(V^*)\to 
\bigl(V{\otimes}\Lambda^2(V^*)\bigr)\otimes 
\bigl(V{\otimes}\Lambda^2(V^*)\bigr)\to
V{\otimes}V^*{\otimes}\Lambda^2(V^*)\to
V{\otimes}\Lambda^3(V^*).
$$
Given a basis~$v_i$ of~$V$ with dual basis~$v^i$, the formula for $J$ is
$$
J\bigl(\tfrac12c^i_{jk}\,v_i\otimes{v^j{\w}v^k}\bigr) = 
\tfrac16(c^i_{jm}c^m_{kl}+c^i_{km}c^m_{lj}+c^i_{lm}c^m_{jk})\,
v_i\otimes{v^j{\w}v^k{\w}v^l}.
$$
Of course, in this case, \emph{all} $V$-valued coframings of type~$A=\{a_0\}$
are locally equivalent up to diffeomorphism.

This motivates the following definitions:  
A submanifold~$A\subset V\otimes \Lambda^2(V^*)$ 
is said to be a \emph{Jacobi manifold} if
\begin{equation}\label{eq: Jacobicond}
J(a)\in \sigma(T_aA\otimes V^*)
\end{equation}
for all~$a\in A$, where 
$\sigma:V\otimes\Lambda^2(V^*)\otimes V^*\to V\otimes\Lambda^3(V^*)$ 
is the skewsymmetrization mapping defined by exterior multiplication.
The condition~\eqref{eq: Jacobicond} is an obvious necessary condition 
in order for there to exist a $V$-valued coframing~$\omega:TM\to V$ 
whose structure function takes values in~$A$ 
and assumes the value~$a\in A$.  It is not, in general, sufficient.

A Jacobi manifold $A$ is \emph{involutive} 
if each of its tangent spaces~$T_a\subset V\otimes\Lambda^2(V^*)$ 
is involutive, with characters~$s_i(T_a)=s_i$.

I can now state a useful existence result that I will apply 
in some examples.

\begin{theorem}\label{thm: CartanThmVar2}
Let $V$ be a vector space, 
and let~$A\subset V{\otimes}\Lambda^2(V^*)$ 
be a real-analytic, involutive Jacobi manifold.  
Then, for any~$a_0\in A$, 
there exists a $V$-valued coframing~$\omega$ of type~$A$
on a neighborhood~$U$ of~$0\in V$ 
such that its structure function~$C$ satisfies $C(0) = a_0$.
\end{theorem}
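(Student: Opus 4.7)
The plan is to construct an EDS on the manifold $M = \GL(V)\times V\times A$ whose nondegenerate integral manifolds correspond to $V$-valued coframings of type~$A$, and to verify that the Cartan-K\"ahler Theorem produces such a manifold tangent to a chosen integral element at $(I_V,0,a_0)$. The setup mirrors the proofs of Theorems~\ref{thm: CTFT} and~\ref{thm: CartanThmVar}: let $p$, $x$, and $a$ denote the three projections from $M$, set $\omega = p\cdot\d x$ to be the tautological $V$-valued $1$-form, and let $\cI$ be the EDS generated by the components of the $V$-valued torsion $2$-form $\Upsilon = \d\omega + \tfrac12\,a(\omega\w\omega)$. Any $n$-dimensional integral manifold of $\cI$ on which $\d x$ remains a basis projects (after shrinking) diffeomorphically onto an open neighborhood of $0\in V$ and pulls $\omega$ back to a $V$-valued coframing of type~$A$ whose structure function is the pullback of $a$.

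A direct computation yields
$$
\d\Upsilon \equiv \tfrac12\,\d a\w\omega\w\omega \,\pm\, J(a)(\omega\w\omega\w\omega) \pmod{\Upsilon},
$$
where $\d a$ is viewed as a $T_aA$-valued $1$-form on $M$. At $z_0 = (I_V,0,a_0)$, an $n$-plane $E\subset T_{z_0}M$ on which $\d x$ is independent is uniquely specified by numbers $s^i_{jk}$ (with $\d p^i_j|_E = s^i_{jk}\,\d x^k$) and by a linear map $B:V\to T_{a_0}A$ (with $\d a|_E = B\cdot\d x$). The integrality conditions $\Upsilon|_E = 0$ and $\d\Upsilon|_E = 0$ reduce, respectively, to $s^i_{jk}-s^i_{kj} = -(a_0)^i_{jk}$ and to the equation $\sigma(B) = \mp J(a_0)$ in $V\otimes\L^3(V^*)$; the first always has solutions, and the Jacobi hypothesis~\eqref{eq: Jacobicond} guarantees the second does as well. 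Choose one such $E$, and then choose a flag $F=(E_0,\ldots,E_{n-1})$ in $E$ whose image $(V_0,V_1,\ldots,V_{n-1})$ in~$V$ under $\d x|_E$ is a regular flag for the involutive tableau $T_{a_0}A\subset V\otimes\L^2(V^*)$.

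The final step, which I expect to be the main technical obstacle, is to verify that Cartan's bound is saturated at $(E,F)$. A polar-space computation, analogous to the corresponding computations in the proofs of Theorems~\ref{thm: CTFT} and~\ref{thm: CartanThmVar}, shows that the equations defining $H(E_i)$ split into $n$ new equations per step coming from the $\Upsilon$-generators (reflecting the $\GL(V)$-factor, whose tableau is trivially involutive with every flag regular) together with, for $i\ge 2$, further equations from $\d\Upsilon$ whose successive codimension increments are exactly the Cartan characters $s_1, s_2, \ldots, s_n$ of $T_{a_0}A$; this gives $c(E_i) = ni + (s_1+\cdots+s_i)$ for $0\le i<n$. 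On the other hand, explicitly parametrizing $\cV_n(\cI)$ near $E$ via the pair $(s,B)$ shows that its codimension in $\Gr_n(TM)$ equals $\tfrac12 n^2(n{-}1) + n\dim T_{a_0}A - \dim(T_{a_0}A)^{(1)}$. The involutivity identities $\dim T_{a_0}A = s_1+\cdots+s_n$ and $\dim(T_{a_0}A)^{(1)} = s_1+2s_2+\cdots+ns_n$ then make these two counts agree, so $E$ is Cartan-ordinary, and the real-analytic Cartan-K\"ahler Theorem produces an integral manifold of $\cI$ tangent to $E$, whose projection to $V$ yields the desired coframing.
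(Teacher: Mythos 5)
Your proposal is correct and follows essentially the same route as the paper's proof: the same manifold $\GL(V)\times V\times A$, the same ideal generated by the torsion $2$-form $\Upsilon=\d\omega+\tfrac12 a(\omega\w\omega)$, the Jacobi condition~\eqref{eq: Jacobicond} used to produce an integral element, involutivity of the tableau $T_{a_0}A$ used to saturate Cartan's bound, and Cartan-K\"ahler to finish. The only presentational difference is that the paper first solves the inhomogeneous Jacobi system for real-analytic functions $R^\alpha_i(a)$ on a whole neighborhood in $A$ (using the constant-rank consequence of involutivity), so that $\cI$ becomes \emph{algebraically} generated by the $\Upsilon^i$ and the $3$-forms $\Psi^i=({\partial T^i_{jk}}/{\partial a^\alpha})\,\pi^\alpha\w\eta^j\w\eta^k$, whereas you carry $\d\Upsilon$ directly and solve that system only at the base point $a_0$; both choices lead to the same polar-space count and the same conclusion.
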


\begin{proof}
The proof follows the by-now familiar pattern laid
down by Cartan.

The result is local, so one can suppose that~$A$ has dimension~$s$
and is parametrized by a real-analytic embedding~$T:\bbR^s\to A\subset V\otimes\Lambda^2(V^*)$ with~$T(0)=a_0$.  Write
$$
T = \tfrac12 T^i_{jk}(a)\,v_i\otimes v^j\w v^k
$$
where the~$T^i_{jk}$ are some real analytic functions on~$\bbR^s$.
By hypothesis, for each~$a = (a^\alpha)\in\bbR^s$, 
the tableau~$A_a\subset V\otimes\Lambda^2(V^*)$ 
spanned by the $s$ independent elements
$$
A_\alpha(a) = \frac{\partial T^i_{jk}}{\partial a^\alpha}(a)
                    \,v_i\otimes v^j\w v^k\,,
\qquad 1\le\alpha\le s,
$$
is involutive, with characters~$s_i$ for $1\le i\le n$. 
By changing the chosen basis of~$V$ if necessary, 
it can even be supposed that the flag such that~$V_i\subset V$
is spanned by~$v_1,\ldots,v_i$ is a regular flag for~$A_0$ 
(and hence it will be regular for~$A_a$ 
for all $a$ in a neighborhood~$O$ of~$0\in\bbR^s$).  
For the rest of the proof, 
I use this basis to identify~$V$ with~$\bbR^n$. 

Also, by the hypothesis that~$A$ is a Jacobi manifold, 
the linear equations for quantities~$R^\alpha_i$ given as
$$
 {\frac{\partial T^i_{kl}}{\partial a^\alpha}}(a)\,R^\alpha_j
+{\frac{\partial T^i_{lj}}{\partial a^\alpha}}(a)\,R^\alpha_k
+{\frac{\partial T^i_{jk}}{\partial a^\alpha}}(a)\,R^\alpha_l
=\bigl(T^i_{mj}(a)T^m_{kl}(a)
         +T^i_{mk}(a)T^m_{lj}(a)+T^i_{ml}(a)T^m_{jk}(a)\bigr).
$$
are solvable, and the associated homogeneous linear system for
the~$R^\alpha_i$ has, for each value of~$a$, a solution space
of dimension~$s_1+2\,s_2+\cdots+n\,s_n$.  Thus, the equations
are compatible and have constant rank, so there exist real-analytic
functions~$R^\alpha_i(a)$ on a neighborhood of~$0\in\bbR^s$
(which can be supposed to be~$O$)
that furnish solutions to the above inhomogeneous system.

Let~$M = \GL(n,\bbR)\times\bbR^n\times O$, where $O\subset\bbR^s$
is the neighborhood of~$0\in\bbR^s$ selected above.  Let 
$p:M\to\GL(n,\bbR)$, $x:M\to\bbR^n$, and $a:M\to O$ be the respective
projections.  Set~$\eta^i = p^i_j\,\d x^j$ and 
$\pi^\alpha = \d a^\alpha - R^\alpha_i(a)\,\eta^i$.  

Now let~$\cI$ be the ideal on~$M$ generated by the $2$-forms
$$
\Upsilon^i = \d\eta^i +\tfrac12 T^i_{jk}(a)\,\eta^j\w\eta^k.
$$
Note that there exist $1$-forms~$\pi^i_j$ 
such that $\Upsilon^i = \pi^i_j\w\eta^j$ and such that the $1$-forms
$\pi^i_j$, $\eta^i$, and $\pi^\alpha$ are linearly independent
and hence define a coframing on~$M$.

Now, by the way the functions~$R^\alpha_i$ on $O$ were chosen, 
one has
$$
\d\Upsilon^i = 
\tfrac12{\frac{\partial T^i_{jk}}{\partial a^\alpha}}(a)
\,\pi^\alpha\w\eta^j\w\eta^k
+ T^i_{jk}(a)\,\Upsilon^j\w\eta^k\,,
$$
so that~$\cI$ is generated algebraically by the $2$-forms~$\Upsilon^i
= \pi^i_j\w\eta^j$ and the $3$-forms
$$
\Psi^i = {\frac{\partial T^i_{jk}}{\partial a^\alpha}}(a)
              \,\pi^\alpha\w\eta^j\w\eta^k\,.
$$

Since $A$ is involutive, the integral elements in~$\cV_n(\cI)$
defined at each point of~$M$ by~$\pi^i_j=\pi^\alpha=0$ 
are all Cartan-ordinary. By the Cartan-K\"ahler Theorem, 
there is an $n$-dimensional integral manifold~$\cI$ 
tangent to this integral element at the point~$(I_n,0,0)\in M$.

This integral manifold is written as a graph of the form
$\bigl(p^i_j(x),x,a^\alpha(x)\bigr)$ for $x$ in a neighborhood
of~$0\in\bbR^n$.  Now, setting~$\omega^i = p^i_j(x)\,\d x^j$,
one sees that the structure function
of the coframing~$\omega = v_i\omega^i$ is
$$
C = \tfrac12 T^i_{jk}\bigl(a^\alpha(x)\bigr)\,v_i\otimes v^j\w v^k,
$$
which takes values in~$A$ and, in particular, 
takes the value~$a_0\in A$ at $x=0$.
\end{proof}

\begin{remark}[Checking the hypotheses]
Note that, in practical terms, checking the condition 
that~$A\subset V\otimes\Lambda^2(V^*)$ be an involutive Jacobi
manifold can be reduced to a relatively simple calculation:

A coframing satisfying the structure equations~\eqref{eq: domegastreqs}
will necessarily satisfy
$$
0 = \d(\d\omega) 
= -\tfrac12 \d C\w(\omega\w\omega) 
   +\tfrac12 C\bigl(C(\omega\w\omega)\w\omega),
$$
or, relative to a basis~$v_i$ of~$V$ with dual basis~$v^i$,
$$
0 = 
-\tfrac12 \d C^i_{jk}\w\omega^j\w\omega^k
+\tfrac16 (C^i_{mj} C^m_{kl}+C^i_{mk} C^m_{lj}+C^i_{ml} C^m_{jk})
            \,\omega^j\w\omega^k\w\omega^l.
$$
Regarding the~$v^i$ as linear coordinates on~$V$ and regarding
the $C^i_{jk}=- C^i_{kj}$ as the components of the embedding of~$A$
into~$V\otimes\Lambda^2(V^*)$, one can consider the \emph{algebraic}
ideal~$\cI_A$ generated on~$M = A\times V$ by the $3$-forms
$$
\bar\Psi^i = \tfrac12\d C^i_{jk}\w\d v^j\w\d v^k
 - \tfrac16 (C^i_{mj} C^m_{kl}{+}C^i_{mk} C^m_{lj}{+}C^i_{ml} C^m_{jk})
            \,\d v^j\w\d v^k\w\d v^l.
$$
(N.B.:  Just this once, I do \emph{not} want to consider the 
differential closure of~$\cI_A$.)  

Then $\cI_A$ has an integral element $E$ of dimension~$n$ 
based at~$(a,0)\in A\times V$ on which the $\d v^i$ are independent
if and only if~\eqref{eq: Jacobicond} is satisfied.  Moreover,
this integral element is Cartan-ordinary if and only if~$T_aA$
is an involutive subspace of~$V\otimes\Lambda^2(V^*)$.
\end{remark}

\begin{remark}
It will turn out that the $s_i$ for a involutive Jacobi manifold~$A$
have a significance for describing the differential invariants
of $V$-valued coframings taking values in~$A$.  
As will be shown below, in an appropriate sense,
the $V$-valued coframings whose structure functions take values in~$A$
depend (modulo diffeomorphism) on $s_1$ functions of $1$ variable, 
$s_2$ functions of $2$ variables, etc. 
\end{remark}

\section{Ordinary prolongation}
It is time to take a closer look at the geometry of~$\cV^o_n(\cI)$.

\subsection{The tableau of an ordinary element}
Recall that the basepoint projection~$\pi:\cV^o_n(\cI)\to M$ 
is a smooth submersion, so the fiber over~$x$, 
which is $\cV^o_n(\cI)\cap\Gr_n(T_xM)$, 
is a smooth submanifold of~$\Gr_n(T_xM)$. 
For a given~$E\in\cV^o_n(\cI)$, 
the tangent space to this fiber is an involutive tableau
$$
A_E\subset T_E\Gr_n(T_xM)\simeq \bigl(T_xM/E\bigr)\otimes E^*
$$
of dimension~$s_1(E)+2s_2(E)+\cdots+ns_n(E)$, 
and its Cartan characters are given by
$$
s_i(A_E) = s_i(E) + s_{i+1}(E) + \cdots + s_n(E).
$$ 

\subsection{The ordinary prolongation of~$\cI$}
Set~$M^{(1)} = \cV^o_n(\cI)$. Define a subbundle~$C\subset T^*M^{(1)}$ 
by letting
$$
C(E) = \pi^*(E^\perp),
$$
where $E^\perp\subset T^*_{\pi(E)}M$ 
is the annihilator of~$E\subset T_{\pi(E)}M$.  
This subbundle of rank $\dim M - n$ 
is known as the \emph{contact bundle} on $M^{(1)}$.

Let $\cI^{(1)}\subset\cA^*\bigl(M^{(1)}\bigr)$ 
denote the differential ideal generated by the sections of~$C$.  
The ideal~$\cI^{(1)}$ on~$M^{(1)}$
is known as the \emph{ordinary prolongation} of~$\cI$ on~$M$.
(Technically, the definition of the prolongation 
depends on the choice of~$n$, but, in nearly all applications, 
the choice of~$n$ is determined by the problem that~$\cI$ was designed 
to study, so I will not make this part of the notation.)

Every Cartan-ordinary integral manifold~$f:N\to M$ 
has a canonical lift~$f^{(1)}:N\to M^{(1)}$, 
defined by $f^{(1)}(x) = f'(T_xN)\in \cV^o_n(\cI) = M^{(1)}$.
It follows directly from the definition that $f^{(1)}:N\to M^{(1)}$
is an integral manifold of~$\cI^{(1)}$ 
and, moreover, any integral manifold~$h:N\to M^{(1)}$
that is an integral of~$\cI^{(1)}$ 
and has the property that $\pi\circ h:N\to M$ is an immersion 
is of the form~$h=f^{(1)}$, in fact, with $f=\pi\circ h$.

At the integral element level, 
every~$\tilde E\in\cV_n\bigl(\cI^{(1)}\bigr)$ 
with $\tilde E\subset T_EM^{(1)}$ 
such that $\pi':\tilde E\to T_{\pi(E)}M$ is injective 
actually satisfies $\pi'(\tilde E) = E$.
Moreover, each such $\tilde E$ is Cartan-ordinary, 
with Cartan characters
$$
s_i(\tilde E) = s_i(E) + s_{i+1}(E) + \cdots + s_n(E),
$$
and with a flag~$\tilde F=(\tilde E_0,\ldots,\tilde E_{n-1})$ of~$\tilde E$ 
being regular if and only if the flag~$F= (E_0,\ldots,E_{n-1})$
with $E_i=\pi'(\tilde E_i)$ is a regular flag of~$E$.

\subsection{The higher prolongations}
In particular, one can repeat the prolongation process,
but, now considering $M^{(2)}\subset \cV_n^o\bigl(\cI^{(1)}\bigr)$
to be the open subset consisting of those $\tilde E$ that satisfy
the `transversality' condition $\pi'(\tilde E) = E$ (and retaining
the corresponding condition for all the higher prolongations, etc).
This defines a sequence of manifolds~$M^{(k)}$ with ideals~$\cI^{(k)}$,
such that $\bigl(M^{(0)},\cI^{(0)}\bigr)=(M,\cI)$ 
while, for $k\ge1$, the manifold~$M^{(k)}$ 
is embedded as an open subset of~$\cV_n^o\bigl(\cI^{(k-1)}\bigr)$. 
By induction, one sees that the ideal~$\cI^{(k)}$ has Cartan characters
$$
s^{(k)}_j = s_j + {k\choose1}s_{j+1} + {{k+1}\choose2}s_{j+2} 
             +\cdots+ {{k+n-j-1}\choose{n-j}}s_n\,.
$$
One should think of~$M^{(k)}$ 
as the space of $k$-jets of $n$-dimensional
Cartan-ordinary integral manifolds of~$\cI$ in the sense 
that two Cartan-ordinary integral manifolds~$f:N\to M$ and $g:N\to M$
represent the same $k$-jet of an integral manifold at~$x\in N$
if and only if $f^{(k)}(x) = (g\circ h)^{(k)}(x)$ 
for some diffeomorphism~$h:N\to N$ such that $h(x)=x$.

Note that
$$
\dim M^{(k)} = n + {{k}\choose0}s_0 + {{k+1}\choose1}s_1 
              + {{k+2}\choose2}s_2  +\cdots+ {{k+n}\choose{n}}s_n\,,
$$
which is what one would expect for a `solution space' that depends
on $s_0$ constants, $s_1$ functions of $1$ variable, 
$s_2$ functions of $2$ variables, 
$\ldots$, and $s_n$ functions of $n$ variables.

\subsection{Prolonging Cartan structure equations}
This idea can also be applied to understanding 
the differential invariants of the solutions 
to a system of Cartan structure equations such as~\eqref{eq: CSE2}.  
Starting with these equations, 
one can augment them with a system for the~$b^\rho$, namely
\begin{equation}\label{eq: CSE2p}
\d b^\rho = \bigl(G^\rho_i(a,b) + H^\rho_{i\tau}(a,b)c^\tau\bigr)\,\omega^i
\end{equation}
where the the functions~$H^\rho_{i\tau}$ 
for $1\le\tau\le\dim A(a,b)^{(1)}$
are a basis for the first prolongation space of the tableau~$A(a,b)$,
i.e., they give a basis for the solutions of the homogeneous equations
$$
\frac{\partial F^\alpha_i}{\partial b^\rho}(a,b) h^\rho_j
- \frac{\partial F^\alpha_j}{\partial b^\rho}(a,b) h^\rho_i = 0.
$$ 

Using Cartan's ideas, it is not difficult to show that, 
if the system~\eqref{eq: CSE2}
satisfies the hypotheses of Theorem~\ref{thm: CartanThmVar}, 
then the prolonged system of structure equations 
consisting of~\eqref{eq: CSE2} 
and~\eqref{eq: CSE2p} will also satisfy the hypotheses 
of Theorem~\ref{thm: CartanThmVar} 
and that the Cartan characters 
of the tableau of the prolonged system will be
$$
s^{(1)}_i = s_i + s_{i+1} + \cdots + s_n\,.
$$
In particular, in this case, for any given~$(a_0,b_0,c_0)$
there will exist an augmented coframing~$(a,b,c,\omega)$ 
satisfying the prolonged structure equations for which~$(a,b,c)$
assumes the value~$(a_0,b_0,c_0)$.

This leads naturally to the notion of `differential invariants' 
for distinguishing augmented coframings $(a,b,\omega)$ 
satisfying~\eqref{eq: CSE2} up to diffeomorphism.  
Recall that two such coframings~$(a,b,\omega)$ on~$M^n$ 
and~$(\bar a, \bar b,\bar\omega)$ on~$\bar M^n$ are equivalent
up to diffeomorphism if there exists a diffeomorphism~$h:\bar M\to M$
satisfying~$(\bar a,\bar b,\bar\omega)= h^*(a,b,\omega)$. Obviously,
this will imply that, if $\d b^\rho = b^\rho_i\,\omega^i$ 
and $\d {\bar b}^\rho = {\bar b}^\rho_i\,{\bar\omega}^i$, 
then ${\bar b}^\rho_i = h^*( b^\rho_i)$ 
and similarly for all of the derivatives of the $b^\rho_j$ 
expanded in terms of the $\omega^i$.  

Following Cartan's terminology,
one often speaks of the $a^\alpha$ as the \emph{primary} 
(or \emph{fundamental}) invariants of the augmented coframing
and the $b^\rho$ and $b^\rho_i$, etc. as \emph{derived} invariants. 
(Here `invariant' means `invariant under diffeomorphism equivalence'.)

Thus, the import of Theorem~\ref{thm: CartanThmVar} 
is that one sees that, in addition to being able to freely specify 
the values of the $s$ primary invariants (i.e., the $a^\alpha$)
of an augmented coframing~$(a,b,\omega)$ satisfying~\eqref{eq: CSE2} 
at a point, one can also freely specify 
their first derived invariants (i.e., the $b^\rho$), 
which are $r = s_1+s_2+\cdots+s_n$ in number, at the point,
and freely specify 
a certain number of second derived invariants (i.e., the~$c^\tau$)
which are $r^{(1)} = s_1 + 2s_2 + \cdots + ns_n$ in number, at the point,
and so on. 

Applying prolongations successively, 
one sees that the number of freely specifiable differential invariants 
of augmented coframings satisfying~\eqref{eq: CSE2} 
of derived order less than or equal to~$k$ is equal to
$$
s + {k\choose1}s_{1} + {{k+1}\choose2}s_{2} 
             +\cdots+ {{k+n-1}\choose{n}}s_n\,.
$$ 
In a sense that can be made precise, 
this is the dimension of the space of $k$-jets of diffeomorphism
equivalence classes of augmented coframings satisfying~\eqref{eq: CSE2}.

It is in this sense that one can assert 
that, up to diffeomorphism, the `general' augmented coframing 
satisfying a given involutive system of Cartan structure equations 
depends on $s_1$ functions of $1$ variable, 
$s_2$ functions of $2$ variables, and so on.

Similar remarks apply to the structure equations 
of Theorem~\ref{thm: CartanThmVar2}.  In fact, the first prolongation
of these structure equations yield structure equations 
to which Theorem~\ref{thm: CartanThmVar} applies, 
so that one could have simply quoted Theorem~\ref{thm: CartanThmVar} 
to prove Theorem~\ref{thm: CartanThmVar2}. 
This may make the reader wonder why this latter theorem is useful. 
The reason is this:  It is often simpler 
to check the hypotheses of Theorem~\ref{thm: CartanThmVar2} 
for a given set of structure equations
than it is to check the hypotheses of Theorem~\ref{thm: CartanThmVar}
for the prolonged set of structure equations 
(as the reader will see in the examples).

\subsection{Non-ordinary prolongation and the Cartan-Kuranishi Theorem}
In most cases, $\cV_n(\cI)$ does not consist entirely 
of Cartan-ordinary integral elements, and even when 
the open subset $\cV^o_n(\cI)\subset\cV_n(\cI)$ is not empty, 
one is often interested in at least some components of the complement 
and would like to know when there exist integral manifolds 
tangent to these non-ordinary integral elements.  

Cartan's prescription for treating this situation 
was to prolong the non-ordinary integral elements as well:  
Let $M^{(1)}\subset \cV_n(\cI)$ be any submanifold of~$\cV_n(\cI)$
(in most applications, it will be a component of a smooth stratum 
of~$\cV_n(\cI)$ that does not lie in~$\cV^o_n(\cI)$).  
Then, again, one can construct the ideal~$\cI^{(1)}$ 
generated by the sections of the contact subbundle~$C\subset T^*M^{(1)}$ 
and one can consider~$\cV_n\bigl(\cI^{(1)}\bigr)$, 
looking for Cartan-ordinary integral elements of this ideal 
whose projections to~$M$ are injective.
If one finds them, then one has existence 
for integral manifolds tangent to these non-regular integral elements.  
If one does not find them, one can continue the prolongation process
as long as it results in ideals that have integral elements.

Cartan believed that continuing this process 
would always eventually result in either an ideal 
with no integral elements of dimension~$n$
or else one that had Cartan-ordinary integral elements.  
He was never actually able to prove this result, though.  
Finally, a version of this `prolongation theorem' 
was proved by Kuranishi (in the real analytic category, of course).  

The hypotheses of the Cartan-Kuranishi Prolongation Theorem 
are somewhat technical, so I refer you to Kuranishi's original 
paper~\cite{Kur} for those.  In practice, though, 
one uses the Prolongation Theorem as a justification 
for computing successively higher prolongations 
until one reaches involutivity 
(i.e., the existence of Cartan-ordinary integral elements), 
which, nearly always, is what one must do anyway in order 
to prove existence of solutions via Cartan-K\"ahler.

\section{Some applications}

There are many applications of these structure theorems in differential
geometry.  Here is a sample of such applications meant to give the reader
a sense of how they are used in practice.  For further applications 
to differential geometry, the reader can hardly do better than to consult
Cartan's own beautiful collection of instructive examples~\cite{Cartan1945}.

\subsection{Surface metrics with $|\nabla K|^2=1$}
Consider the metrics whose Gauss curvature satisfies~$|\nabla K|^2=1$.
The structure equations are
$$
\begin{aligned}
\d\omega_1 &= -\omega_{12}\w\omega_2\\
\d\omega_2 &= \phm\omega_{12}\w\omega_1\\
\d\omega_{12} &= K\,\omega_1\w\omega_2
\end{aligned}
\qquad\qquad \omega_1\w\omega_2\w\omega_{12} \not=0,
$$ 
where
$$
\d K = \cos b\,\omega_1 + \sin b\,\omega_2\,.
$$
for some function~$b$.  (Here, $b$ is the `free derivative'.) 

Now $\d^2=0$ is an identity for the forms 
in the coframing~$\omega=(\omega_1,\omega_2,\omega_{12})$, while 
$$
0 = \d(\d K) = (\d b - \omega_{12})\w(-\sin b\,\omega_1 + \cos b\,\omega_2).
$$
It follows that the hypotheses of Theorem~\ref{thm: CartanThmVar} 
are satisfied, with the characters of the tableau of free derivatives being
$s_1 = 1$, $s_2 = s_3 = 0$.  Thus, the general (local) solution depends on 
one function of one variable.

The prolonged system will have
$$
\d b = \omega_{12} + c\,(-\sin b\,\omega_1 + \cos b\,\omega_2)
$$
where $c$ is now the new `free derivative', etc.

(Of course, it is not difficult to integrate the structure equations
in this simple case and find an explicit normal form involving one 
arbitrary function of one variable, but I will leave this to the reader.)

\subsection{Surface metrics of Hessian type}
Now, an application of Cartan's original theorem.
The goal is to study those Riemannian surfaces~$(M^2,g)$ 
whose Gauss curvature~$K$ satisfies the second order system
$$
{\mathrm{Hess}}_g(K) = a(K) g + b(K) \d K^2
$$
for some functions~$a$ and~$b$ of one variable.  

Writing~$g = {\omega_1}^2+{\omega_2}^2$ 
on the orthonormal frame bundle~$F^3$ of~$M$, 
the structure equations become
$$
\begin{aligned}
\d\omega_1 &= -\omega_{12}\w\omega_2\\
\d\omega_2 &= \phm\omega_{12}\w\omega_1
\end{aligned}
\qquad\qquad
\begin{aligned}
\d\omega_{12} &= K\,\omega_1\w\omega_2\\
\d K &= K_1\,\omega_1 + K_2\,\omega_2
\end{aligned}
$$
and the condition to be studied is encoded as
$$
\begin{pmatrix}
\d K_1\\ \d K_2
\end{pmatrix}
= \begin{pmatrix}
-K_2\\ \phm K_1
\end{pmatrix}\omega_{12}
+\begin{pmatrix}
a(K)+b(K)\,{K_1}^2 & b(K)\,K_1K_2\\ b(K)\,K_1K_2 & a(K)+b(K)\,{K_2}^2\\
\end{pmatrix}
\begin{pmatrix}
\omega_1\\ \omega_2
\end{pmatrix}.
$$
Applying $\d^2=0$ to these two equations yields
$$
\bigl(a'(K)-a(K)b(K) + K\bigr)\,K_i = 0\qquad \text{for $i=1,2$}.
$$
Thus, unless~$a'(K)=a(K)b(K){-}K$, such metrics have~$K$ constant.

Conversely, suppose that $a'(K)=a(K)b(K){-}K$.
The question becomes `Does there exist a `solution'~$(F^3,\omega)$ 
to the following system?'
$$
\begin{aligned}
\d\omega_1 &= -\omega_{12}\w\omega_2\\
\d\omega_2 &= \phm\omega_{12}\w\omega_1\\
\d\omega_{12} &= K\,\omega_1\w\omega_2
\end{aligned}
\qquad\qquad \omega_1\w\omega_2\w\omega_{12} \not=0,
$$ 
where
$$
\begin{pmatrix}
\d K\\ \d K_1\\ \d K_2
\end{pmatrix}
= 
\begin{pmatrix}
K_1 & K_2  & 0\\
a(K)+b(K)\,{K_1}^2 & b(K)\,K_1K_2 & -K_2 \\ 
b(K)\,K_1K_2 & a(K)+b(K)\,{K_2}^2 & \phm K_1\\
\end{pmatrix}
\begin{pmatrix}
\omega_1\\ \omega_2\\ \omega_{12}
\end{pmatrix}.
$$

Since $\d^2=0$ is formally satisfied for these structure equations,
Theorem~\ref{thm: CTFT} applies and guarantees that, 
for any constants~$(k,k_1,k_2)$, 
there is a local solution with the invariants
$(K,K_1,K_2)$ taking the value $(k,k_1,k_2)$.

In fact, the above equations show that, on a solution, the $\bbR^3$-valued 
function~$(K,K_1,K_2)$ either has rank $0$ (if $K_1=K_2=a(K)=0$) or
rank~$2$.  Moreover, one sees that
$$
-\bigl(a(K)+b(K)({K_1}^2{+}{K_2}^2)\bigr)\,\d K
    +K_1\,\d K_1+K_2\,\d K_2 = 0,
$$
so that the image of a connected solution lies in an integral leaf 
of this $1$-form, which only vanishes when $K_1=K_2=a(K)=0$.  
Setting $L={K_1}^2{+}{K_2}^2$, this expression becomes
$$
-2\bigl(a(K)+b(K)L\bigr)\,\d K+\d L = 0,
$$
which has an integrating factor:  If $\lambda(K)$ is a
nonzero solution to $\lambda'(K) = -b(K)\lambda(K)$, then
$$
-2\lambda(K)^2a(K)\,\d K + \d\bigl(\lambda(K)^2L) = 0,
$$
so that the curvature map has image in a level set
of the function $F(K,K_1,K_2) = \lambda(K)^2({K_1}^2{+}{K_2}^2) - \mu(K)$, 
where $\mu'(K) = 2\lambda(K)^2a(K)$.  (This function has critical points
only where $K_1=K_2=a(K)=0$.)

On any solution~$(F^3,\omega)$, 
the vector field $Y$ defined by the equations
$$
\omega_1(Y) =  \lambda(K)K_2,\quad
\omega_2(Y) = -\lambda(K)K_1,\quad
\omega_{12}(Y) =  \lambda(K)a(K),
$$
is a symmetry vector field of the coframing (since the Lie derivative
of each of $\omega_1$, $\omega_2$, $\omega_{12}$ with respect to~$Y$ 
is zero).  It is nonvanishing on a solution of rank~$2$, and, 
up to constant multiples, it is the unique symmetry vector field 
of the coframing on any connected solution.

For simplicity, I will only consider the case~$b(K)\equiv0$ 
in the remainder of this discussion.  In this case,~$a'(K)=-K$, 
so $a(K) = \tfrac12(C-K^2)$ for some constant~$C$ 
and~$\lambda'(K)=0$, so one can take~$\lambda(K)\equiv 1$.

The most interesting case is when $C>0$, and, by scaling the metric~$g$
by a constant, one can reduce to the case $C=1$.  
Thus, the equations simplify to  
$$
\begin{pmatrix}
\d K\\ \d K_1\\ \d K_2
\end{pmatrix}
= 
\begin{pmatrix}
K_1 & K_2  & 0\\
\tfrac12(1{-}K^2) & 0 & -K_2 \\ 
0 & \tfrac12(1{-}K^2) & \phm K_1\\
\end{pmatrix}
\begin{pmatrix}
\omega_1\\ \omega_2\\ \omega_{12}
\end{pmatrix}.
$$
and these functions satisfy
$$
F(K,K_1,K_2) = {K_1}^2{+}{K_2}^2 + \tfrac13 K^3 - K = C
$$
where $C$ is a constant 
(different from the previous $C$, which is now normalized to~$1$).

There are two critical points of~$F$, namely $(K,K_1,K_2) = (\pm1,0,0)$,
and these correspond to the surfaces whose Gauss curvature 
is identically $+1$ or identically $-1$.  These clearly exist globally
so it remains to consider the other level sets.  

The level sets with $C < -\tfrac23$ are connected and contractible,
in fact, they can be written as graphs of $K$ as a function 
of~${K_1}^2{+}{K_2}^2$.  $C=-\tfrac23$ contains the critical 
point $(K,K_1,K_2) = (1,0,0)$, but away from this point, it is also
a smooth graph.  When $-\tfrac23<C<\tfrac23$, the level set has
two smooth components, a compact $2$-sphere that encloses the 
critical point $(1,0,0)$ and a graph of $K$ as a smooth function 
of~${K_1}^2{+}{K_2}^2$.  The level set~$C=\tfrac23$ is singular
at the point $(-1,0,0)$, but, minus this point, it has two smooth
pieces, one bounded and simply connected, and one unbounded and 
diffeomorphic to $\bbR\times S^1$.  For $C>\tfrac23$, the level
set is connected and contractible.

According to the general theory, for each contractible
component~$L$ of a (smooth part of a) level set~$F=C$, 
there will exist a simply-connected solution manifold~$(F^3,\omega)$ 
whose curvature image is~$L$ and whose symmetry vector field~$Y$
is complete.  Moreover, the time-$2\pi$-flow of the vector field
$X_{12}$ (i.e., the vector field that satisfies~$\omega_1(X_{12})
=\omega_2(X_{12})=0$ while $\omega_{12}(X_{12})=1$) is a symmetry
of the coframing~$\omega$ and hence is the time-$T$-flow of~$Y$
for some~$T>0$.  Dividing~$F$ by the $\bbZ$-action that this 
generates produces a solution manifold~$(\bar F,\omega)$ 
that is no longer simply-connected 
but on which the flow of~$X_{12}$ is $2\pi$-periodic,
and this is the necessary and sufficient condition that $\bar F$
be the oriented orthonormal frame bundle of a Riemannian
surface~$(M^2,g)$ satisfying the desired equation.

However, for the components of the level sets that are diffeomorphic 
to the $2$-sphere, this global existence result does not generally
hold, i.e., the corresponding solution manifold~$(F^3,\omega)$ 
need not be the orthonormal frame bundles of complete 
Riemannian surfaces~$(M^2,g)$. I will explain why for the 
$2$-sphere components of the level sets $F=\epsilon^2-2/3$
where $\epsilon>0$ is small.  

Suppose that a connected solution manifold~$(F^3,\omega)$
whose curvature map has, as image, such a $2$-sphere component
is found and that the symmetry vector field~$Y$ as defined
above is complete on it. 
Then the metric~$h = {\omega_1}^2+{\omega_2}^2+{\omega_{12}}^2$ 
must be complete on~$F$.  Now, for small positive $\epsilon$, 
one has that $K$ is close to~$1$ while $K_1$ and $K_2$ are 
close to zero, so it follows from a computation that the 
sectional curvatures of~$h$ are all positive.  In particular,
the completeness of the metric on~$F^3$ implies, by Bonnet-Meyers,
that it is compact, with finite fundamental group.  

By passing to a finite cover, one can assume that $F$ 
is simply connected.  I claim that the symmetry vector field~$Y$ 
has closed orbits and that its flow generates an~$S^1$-action on~$F$.  
To see this, note that the map~$(K,K_1,K_2):F\to\bbR^3$ submerses 
onto the $2$-sphere leaf.  Hence the fibers over the two points 
where $K_1=K_2=0$ must be a finite collection of circles that are 
necessarily integral curves of the vector field~$Y$, which has
no singular points.  In particular, the flow of~$Y$ on one of these
circles must be periodic, but, because the flow of~$Y$ preserves
the coframing~$\omega$, if some time~$T>0$ flow of~$Y$ has a fixed
point, then the time~$T$ flow of~$Y$ must be the identity.  
Thus, the flow of~$Y$ is periodic with some minimal positive period~$T>0$, 
so it generates a free $S^1$-action on~$F$. 
The quotient by this free~$S^1$-action is a connected quotient
surface that is a covering of the $2$-sphere.  
Since this covering must be trivial, the orbits of~$Y$ 
are the fibers of the map~$(K,K_1,K_2)$ to the $2$-sphere.
In particular, $F$, being connected and simply-connected, 
must be diffeomorphic to the $3$-sphere.  

Now, consider the vector field~$X_{12}$ on~$F$ as defined above.
This vector field is $(K,K_1,K_2)$-related to the vector field
$$
-K_2\,\frac{\partial\hfill}{\partial K_1}
 +K_1\,\frac{\partial\hfill}{\partial K_2}
$$
on~$\bbR^3$ whose flow is rotation about the $K$-axis with period $2\pi$.
 
It also follows that the flow of $X_{12}$ 
preserves the two circles that are defined by~$K_1=K_2=0$.
If~$(F,\omega)$ is to be a covering of the orthonormal frame bundle 
of a Riemannian surface~$(M^2,g)$, then $X_{12}$ must be periodic
of period~$2k\pi$ for some integer~$k>0$.
As already remarked, by the structure equations, 
the $2\pi$-flow of~$X_{12}$, say~$\Psi$,
is a symmetry of the coframing and hence must be the time~$R>0$
flow of~$Y$ for some unique~$R\in(0,T]$.

Now, along each of the two circles in~$F$ defined by $K_1=K_2=0$, 
one has $Y = a(K)X_{12}\not=0$.
The two points where~$K_1=K_2=0$ satisfy~$K=K_\pm(\epsilon)$ 
where $K_-(\epsilon)<1< K_+(\epsilon)$ 
and $\tfrac13 K_\pm(\epsilon)^3 - K_\pm(\epsilon) = \epsilon^2- \frac23$.
In fact, one finds expansions
$$
K_\pm(\epsilon) = 
1 \pm \epsilon - \tfrac16\epsilon^2 \pm \tfrac{5}{72} \epsilon^3 -\cdots
$$
and this implies that 
$$
a\bigl(K_\pm(\epsilon)\bigr) = \tfrac12(1-K_\pm(\epsilon)^2)
= \mp \epsilon -\tfrac13 \epsilon^2 + \cdots.
$$  
Thus the ratios of $X_{12}$ to $Y$ on these two circles
are not equal or opposite, and hence $Y$ cannot have the same period
on these two circles, which is impossible.  Thus, there
cannot be a global solution surface for such a leaf.

\subsection{Prescribed curvature equations for Finsler surfaces}
For an oriented Finsler surface $(M^2,F)$, Cartan showed that the `tangent 
indicatrix' (i.e., the analog of the unit sphere bundle)~$\Sigma\subset TM$
carries a canonical coframing~$(\omega_1,\omega_2,\omega_3)$ generalizing
the case of the unit sphere bundle of a Riemannian metric.  It satisfies structure equations
\begin{equation}\label{eq: FinslerSE}
\begin{aligned}
\d\omega_1 &= -\omega_2\w\omega_3\\
\d\omega_2 &= -\omega_3\w\omega_1 - I\,\omega_2\w\omega_3\\
\d\omega_3 &= -K\,\omega_1\w\omega_2 - J\,\omega_2\w\omega_3
\end{aligned}
\qquad\qquad \omega_1\w\omega_2\w\omega_3 \not=0,
\end{equation}
where I have written $\omega_3$ for what would be $-\omega_{12}$
in the Riemannian case.  The functions~$I$, $J$, and $K$ are the
\emph{Finsler structure functions}.  

One can check that Theorem~\ref{thm: CartanThmVar2} 
applies directly to these equations, with~$V$ of dimension~$3$
and~$A\subset V\otimes\Lambda^2(V^*)$ 
an affine subspace of dimension~$3$ 
(and on which $I$, $J$, and~$K$ are coordinates).  The Cartan
characters are $s_1=0$, $s_2=2$, and $s_3=1$.
Thus, the general Finsler surface
depends on one function of $3$ variables, which is to be expected, 
since a Finsler structure on $M$ is locally determined by choosing 
a hypersurface in~$TM$ (satisfying certain local convexity conditions) 
to be the tangent indicatrix~$\Sigma\subset TM$.  
In fact, if $\omega=(\omega_i)$ is any coframing 
on a $3$-manifold~$\Sigma^3$ that satisfies~\eqref{eq: FinslerSE} 
such that the space~$M$ of leaves of the system~$\omega_1=\omega_2=0$ 
can be given the structure of a smooth surface 
for which the natural projection~$\pi:\Sigma\to M$ is a submersion, 
then $\Sigma$ has a natural immersion~$\iota:\Sigma\to TM$
defined by letting~$\iota(u)=\pi'\bigl(X_1(u)\bigr)$ for~$u\in\Sigma$,
where~$X_1$ is the vector field on~$\Sigma$ dual to~$\omega_1$, 
and, locally, this defines a Finsler structure on~$M$.

Taking the exterior derivatives of~\eqref{eq: FinslerSE}, 
one finds that they satisfy identities 
(the `Bianchi identities' of Finsler geometry)
\begin{equation}\label{eq: CartanBianchiIds}
\begin{aligned}
\d I ={}&&        J\,\omega_1 &&{}+ I_2\,\omega_2 &&{}+ I_3\,\omega_3\,,&{}\\
\d J ={}&&-(K_3+KI)\,\omega_1 &&{}+ J_2\,\omega_2 &&{}+ J_3\,\omega_3\,,&{}\\
\d K ={}&&      K_1\,\omega_1 &&{}+ K_2\,\omega_2 &&{}+ K_3\,\omega_3\,.&{}\\\end{aligned}
\end{equation}
for seven new functions~$I_2$, $I_3$, $\ldots$, $K_3$.  
These are the free derivatives of the structure theory.
As expected from the general theory, the tableau of free derivatives 
of the prolonged system, i.e., \eqref{eq: FinslerSE} 
together with \eqref{eq: CartanBianchiIds}, 
is involutive with characters $s_1=s_2=3$ and $s_3=1$.  

Now, by the structure equations~\eqref{eq: CartanBianchiIds}, 
if $I=0$, then $J=0$ and $K_3 =0$,
so that the Bianchi identities reduce to
$$
\d K = K_1\,\omega_1 + K_2\,\omega_2\,,
$$
which is simply the Riemannian case.  Note that in this case, 
the tableau of free derivatives has $s_1=s_2=1$ while $s_3=0$, 
corresponding to the fact that Riemannian surfaces depend locally 
on one function of $2$ variables (up to diffeomorphism).

One can, of course, study other curvature conditions.  For example,
the \emph{Landsberg} surfaces are those for which $J=0$.  They satisfy
structure equations
\begin{equation}\label{eq: LandsbergBianchiIds}
\begin{aligned}
\d I ={}&&        0\,\omega_1 &&{}+ I_2\,\omega_2 &&{}+ I_3\,\omega_3\,,&{}\\
\d K ={}&&      K_1\,\omega_1 &&{}+ K_2\,\omega_2 &&{}- KI\,\omega_3\,.&{}\\
\end{aligned}
\end{equation}
The tableau of free derivatives now has $s_1 = s_2 = 2$ and $s_3 = 0$,
so that the general Landsberg metric depends on $2$ functions of $2$ 
variables.  (By the way, this is only a `microlocal' description
of the solutions; constructing global solutions is much more difficult. 
However, it does suffice to show how `flexible' 
the `microlocal' solutions are.)

Another common curvature condition is the `$K$-basic' condition, i.e.,
when, $K$, the Finsler-Gauss curvature, is constant on the fibers of
the projection~$\Sigma\to M$.  This is the condition~$K_3 = 0$, so that
the structure equations become
\begin{equation}\label{eq: KbasicBianchiIds}
\begin{aligned}
\d I ={}&&   J\,\omega_1 &&{}+ I_2\,\omega_2 &&{}+ I_3\,\omega_3\,,&{}\\
\d J ={}&& -KI\,\omega_1 &&{}+ J_2\,\omega_2 &&{}+ J_3\,\omega_3\,,&{}\\
\d K ={}&& K_1\,\omega_1 &&{}+ K_2\,\omega_2 &&{}+ 0\,\omega_3\,.&{}\\
\end{aligned}
\end{equation}
The tableau of free derivatives now has $s_1=s_2=3$ and $s_3=0$, showing
that these Finsler structures depend on $3$ functions of $2$ variables.

Even more restrictive are the Finsler metrics with constant~$K$.  
These satisfy
\begin{equation}\label{eq: KconstBianchiIds}
\begin{aligned}
\d I ={}&&   J\,\omega_1 &&{}+ I_2\,\omega_2 &&{}+ I_3\,\omega_3\,,&{}\\
\d J ={}&& -KI\,\omega_1 &&{}+ J_2\,\omega_2 &&{}+ J_3\,\omega_3\,,&{}\\
\d K ={}&&   0\,\omega_1 &&{}+ 0\,\omega_2 &&{}+ 0\,\omega_3\,.&{}\\
\end{aligned}
\end{equation}
The tableau of free derivatives now has $s_1=s_2=2$ and $s_3=0$, showing
that these Finsler structures depend on $2$ functions of $2$ variables.
(For those who know about characteristics, note that, in this case,
a covector $\xi = \xi_1\,\omega_1 + \xi_2\,\omega_2 + \xi_3\,\omega_3$
is characteristic for this tableau if and only if $\xi_1 = 0$.  Thus,
the `arbitrary functions' are actually functions on the leaf space
of the geodesic flow~$\omega_2=\omega_3=0$.  This suggests (and, of course, it turns out to be true) that these structures are actually geometric structures on the space of geodesics in disguise.)

\subsection{Ricci-gradient metrics in dimension~$3$}
Here are some sample problems from Riemannian geometry.
In the following, for simplicity of notation, 
I will consider only the $3$-dimensional case, 
but the higher dimensional cases are not much different.

Consider the problem of studying those Riemannian manifolds~$(M,g)$
for which there exists a function~$f$ such that $\mathrm{Ric}(g)
= (\d f)^2 + H(f)\,g$, where $H$ is a specified function of one variable.
Most metrics~$g$ will not have such a `Ricci potential', 
and it is not clear how many such metrics there are.  

The problem can be set up in structure equations as follows:  
On the orthonormal frame bundle~$F^6\to M^3$ of~$g$, 
one has the usual first structure equations
\begin{equation}\label{eq: SE1_Riccigrad}
\d\omega_i = -\omega_{ij}\w\omega_j
\end{equation}
and the second structure equations (in dimension $3$) can be written
in the form
\begin{equation}\label{eq: SE2_Riccigrad}
\begin{pmatrix}
\d\omega_{23}\\ \d\omega_{31}\\ \d\omega_{12}
  \end{pmatrix}
= 
- \begin{pmatrix}
\omega_{12}\w\omega_{31}\\ \omega_{23}\w\omega_{12}\\ 
\omega_{31}\w\omega_{23}
  \end{pmatrix}
- \left(R -\tfrac12{\mathrm{tr}}(R)\,I_3\right)
\begin{pmatrix}
\omega_2\w\omega_3\\ \omega_3\w\omega_1\\ \omega_1\w\omega_2
  \end{pmatrix}
\end{equation}
where $R=(R_{ij})$ is the symmetric matrix of the Ricci tensor.  
By hypothesis, there exists a function~$f$ such that 
$$
R_{ij} = f_if_j + H(f)\delta_{ij}
$$
where 
\begin{equation}\label{eq: SE3_Riccigrad}
\d f = f_1\,\omega_1 + f_2\,\omega_2 + f_3\,\omega_3\,.
\end{equation}
The four functions~$(f,f_1,f_2,f_3)$ 
will play the role of the $a^\alpha$ in the structure equations.  
Since $\d(\d f)=0$, there exist functions~$f_{ij}=f_{ji}$ such that
\begin{equation}\label{eq: SE3.5_Riccigrad}
\d f_i = -\omega_{ij} f_j + f_{ij}\,\omega_j\,.
\end{equation}
The symmetry of~$R$ implies that the equations~$\d(\d\omega_i)=0$ 
are identities, but, when one computes~$\d(\d\omega_{ij})=0$, 
one finds that these relations can be written as
$$
\bigl(2(f_{11}+f_{22}+f_{33}) - H'(f)\bigr) \,\d f = 0.
$$
Thus, either $\d f =0$, in which case $f$ is constant 
(so that the metric is Einstein), or else the relation
$$
f_{11}+f_{22}+f_{33} - \tfrac12 H'(f) = 0
$$
must hold.  So impose this condition, 
and rewrite the above equation in the form
\begin{equation}\label{eq: SE4_Riccigrad}
\d f_i = -\omega_{ij} f_j
            + \bigl(b_{ij}+ \tfrac16H'(f)\delta_{ij}\bigr)\,\omega_j\,.
\end{equation}
where the (new) $b_{ij}=b_{ji}$ are subject to the trace condition
$b_{11}+b_{22}+b_{33}=0$.  These~$b_{ij}$ will play the role 
of the~$b^\rho$ in the structure equations.  

Thus, the problem can be thought of as seeking 
coframings~$\omega= (\omega_i,\omega_{ij})$ and functions~$(f,f_i)$ 
on a $6$-manifold~$F^6$ that satisfy the equations
\eqref{eq: SE1_Riccigrad}, \eqref{eq: SE2_Riccigrad}, 
\eqref{eq: SE3_Riccigrad}, and \eqref{eq: SE4_Riccigrad},
where the $b_{ij}=b_{ji}$ are subject to~$b_{11}+b_{22}+b_{33}=0$.

The tableau of the free derivatives is involutive, 
with characters~$s_1=3$, $s_2=2$, and $s_k=0$ for~$3\le k\le 6$.
Moreover, the equations $\d(\d\omega_i)=\d(\d\omega_{ij})=\d(\d f) = 0$
are identities while the equations~$\d(\d f_i) = 0$ 
are satisfiable in the form
$$
\d b_{ij} = -b_{ik}\omega_{kj}-b_{kj}\omega_{ki} 
+ F\,(3f_i\omega_j+3f_j\omega_i-2\delta_{ij}f_k\omega_k)
+ b_{ijk}\omega_k
$$
where $F=\tfrac1{10}\bigl({f_1}^2{+}{f_2}^2{+}{f_3}^2{+}H(f)+\tfrac13H''(f)\bigr)$ and where $b_{ijk}=b_{jik}=b_{ikj}$ 
and $b_{iik} = 0$.  
Hence, there are $7 = s_1 + 2\,s_2 + \cdots + 6\,s_6$ 
independent free derivatives of the~$b_{ij}$, 
the maximum allowed by the characters of their tableau.

Thus, the hypotheses of Theorem~\ref{thm: CartanThmVar} are satisfied.
Consequently, when~$H$ is an analytic function,
the pairs $(g,f)$ that satisfy~${\mathrm{Ric}}(g) = (\d f)^2 + H(f)\,g$
depend on $2$ functions of $2$ variables (up to diffeomorphism).  

(For those who know about the characteristic variety: 
A nonzero covector~$\xi = \xi_i\,\omega_i+\xi_{ij}\,\omega_{ij}$ 
is characteristic if and only if $\xi_{ij}=0$ 
and~${\xi_1}^2{+}{\xi_2}^2{+}{\xi_3}^2=0$.  Thus,
the real characteristic variety is empty, so the solutions
are all real analytic when $H$ is real analytic.)

More generally, one can consider 
the problem of studying those Riemannian manifolds~$(M,g)$
for which there exists a function~$f$ such that 
\begin{equation}\label{eq: generalRiccipotential}
\mathrm{Ric}(g) = a(f)\,\Hess_g(f) + b(f)\,(\d f)^2 + c(f)\,g
\end{equation}
where $a$, $b$, and $c$ are specified functions of one variable
and~$\Hess_g(f)=\nabla\nabla f$ 
is the Hessian of $f$ with respect to~$g$, 
i.e., the quadratic form that is the second covariant derivative of~$f$ 
with respect to the Levi-Civita connection of~$g$.
For example, when $a(f)=-1$, $b(f)=0$, and $c(f)=\lambda$ (a constant), 
\eqref{eq: generalRiccipotential} is the equation for a gradient Ricci soliton.
For simplicity, in what follows, I will assume that $a$, $b$, and $c$
are real-analytic functions. 

If $a(f)\equiv b(f)\equiv0$, then \eqref{eq: generalRiccipotential} 
implies that $g$ is an Einstein metric, 
and so the only solutions $(g,f)$ are ones for which $c(f)$ is a constant.  
In particular, if $c'(f)$ is not identically vanishing, 
then the only solutions~$(g,f)$ 
are when $g$ is Einstein and~$f$ is a constant.

If $a(f)\equiv0$ and $b(f)>0$, 
one can reduce~\eqref{eq: generalRiccipotential} 
to the case $b(f)\equiv1$ (which was treated above)
by replacing~$(g,f)$ by~$\bigl(g,\phi(f)\bigr)$, where $\phi'(f)^2=b(f)$.  
(Meanwhile, when $b(f)<0$, one can reduce to $b(f)\equiv-1$ 
by replacing~$(g,f)$ by~$\bigl(g,\phi(f)\bigr)$ where $\phi'(f)^2 = -b(f)$.
The reader can easily check that the local analysis of this case 
is essentially the same as the case $a(f)\equiv0$ and $b(f)\equiv 1$,
with a few sign changes.)

In the `generic' case, in which $a$ is nonvanishing, 
one can reduce to the case $b(f)\equiv0$ 
by replacing $(g,f)$ by~$\bigl(g,\phi(f)\bigr)$ 
where $\phi$ is a function that satisfies~$\phi'(x) >0$
and $\phi''(x)=\bigl(b(x)/a(x)\bigr)\phi'(x)$.  
Hence, I will consider only the case $b(f)\equiv0$ 
in the remainder of this discussion.
 
Thus, the equation to be studied is encoded 
with the same structure equations~\eqref{eq: SE1_Riccigrad},
\eqref{eq: SE2_Riccigrad}, and \eqref{eq: SE3.5_Riccigrad}
but now with the relations
$$
R_{ij} = a(f)\,f_{ij} + c(f)\,\delta_{ij}\,,
$$
where $a$ is a nonvanishing function.
The equations~$\d(\d\omega_{ij})=\d(\d f_i)=0$ 
then turn out to imply the relation
$$
\d\left(\frac{L(f)}{a(f)}\right)
+\left(1+\frac{a'(f)}{a(f)^2}\right)\,\d H(f)
+\frac{\bigl(2a(f)c(f){-}c'(f)\bigr)}{a(f)^2}\,\d f = 0
$$
where $L(f) = f_{11}{+}f_{22}{+}f_{33}$ 
and $H(f) = {f_1}^2{+}{f_2}^2{+}{f_3}^2$.
Taking the exterior derivative of this relation yields
$$
\left(\frac{a(f)a''(f)-2a'(f)^2}{a(f)^3}\right)\, \d f \w \d H(f) = 0.
$$

At this point, the study of these equations 
divides into cases, 
depending on whether $aa''-2(a')^2$ vanishes identically or not.

If the function $aa''-2(a')^2$ does not vanish identically,
then any pair~$(g,f)$ that satisfies the original equation must
also satisfy equations of the form
$$
{f_1}^2{+}{f_2}^2{+}{f_3}^2 = h(f)
$$
and 
$$
f_{11}{+}f_{22}{+}f_{33} = a(f)l(f)
$$
for functions~$l$ and $h$ of a single variable that satisfy
$$
l'(x) +\left(1+\frac{a'(x)}{a(x)^2}\right)\,h'(x)
+\frac{\bigl(2a(x)c(x){-}c'(x)\bigr)}{a(x)^2} = 0.
$$
The first of these equations implies, upon differentiation,
$$
2f_{ij}f_j = h'(f)f_j
$$
which, as long as~$h(f)>0$, gives three equations 
on the free derivatives $f_{ij}=f_{ji}$. 
Moreover, the equation $f_{11}{+}f_{22}{+}f_{33} = a(f)l(f)$ 
is independent from these three.  
This means that there is only a $2$-parameter family 
of possible variation in the~$f_{ij}$.  In fact, the tableau of free
derivatives in this case is involutive with $s_1=2$ and all $s_i=0$
for $i>1$, so that solutions of this system depend on at most%
\footnote{The reason for the `at most' is that I have not verified
that the torsion is absorbable, so I cannot claim that this
prolonged system is involutive.} 
two functions of one variable (three if you count the function $h$).  
Thus, the pairs $(g,f)$ that satisfy the above equation 
are rather rigid.

On the other hand, if $aa''-2(a')^2$ vanishes identically, 
then $a(f) = 1/(c_0+c_1f)$ for some constants $c_0$ and $c_1$,
not both zero.  

If $c_1=0$, then, by scaling $f$, one can reduce
to the case $a(f) = 1$ and the original equation becomes
$$
R_{ij} = f_{ij} + c(f)\,\delta_{ij}\,,
$$
while the relation above becomes
$$
f_{11}+f_{22}+f_{33} + {f_1}^2 + {f_2}^2 + {f_3}^2 - c(f) + 2C(f) = \lambda,
$$
where $C'(f) = c(f)$, and where $\lambda$ is a constant.
Adding this relation on the `free derivatives'~$f_{ij}$ yields
a tableau of free derivatives that has $s_1=3$, $s_2=2$ and $s_j=0$
for $j>2$.  Moreover, a short calculation reveals that this relation
satisfies the conditions of Theorem~\ref{thm: CartanThmVar}, 
so, up to diffeomorphism, the local general pairs $(g,f)$ 
that satisfy a relation of the form $\Ric(g) = \Hess_g(f) + c(f) g$
(for a fixed real-analytic function $c(f)$) depend on two functions
of two variables.  

Meanwhile, if $c_1\not=0$, then by translating and scaling $f$, one
can reduce to the case $a(f) = 1/f$, and one gets a similar result,
that, up to diffeomorphism, the local general pairs $(g,f)$ (with,
say $f>0$) 
that satisfy a relation of the form $\Ric(g) = (\Hess_g(f))/f + c(f) g$
(for a fixed real-analytic function $c(f)$) also depend on two functions
of two variables.  

\subsection{Riemannian $3$-manifolds with constant Ricci eigenvalues}
In dimension~$3$, a different way of writing the structure equations 
on the orthonormal frame bundle~$F^6$ of~$(M^3,g)$ 
is to write them in `vector' form as
\begin{equation}\label{eq: VectStrEq1}
\d\eta = -\theta\w\eta
\end{equation}
and
\begin{equation}\label{eq: VectStrEq2}
\d\theta = -\theta\w\theta 
            + \bigl(R-\tfrac14\tr(R) I_3\bigr)\,\eta\w{}^t\eta
            + \eta\w {}^t\eta\,\bigl(R-\tfrac14\tr(R) I_3\bigr)
\end{equation}
where~$\eta = (\eta_i)$ takes values in~$\bbR^3$ 
(thought of as columns of real numbers of height~$3$) 
and~$\pi^*g = {}^t\eta\circ\eta$,
while~$\theta = -{}^t\theta = (\theta_{ij})$ takes values in~$\euso(3)$, 
the space of skewsymmetric $3$-by-$3$ matrices, 
and $R={}^tR$ is the $3$-by-$3$ symmetric matrix that 
represents the Ricci curvature, i.e., 
$R = (R_{ij})$ and $\pi^*\bigl(\Ric(g)\bigr)=R_{ij}\,\eta_i\circ\eta_j$.

\subsubsection{The general metric}
Setting~$V = \bbR^3\oplus\euso(3)$ (so that, again,~$n=6$), 
then~$\omega = (\eta,\theta)$
is a $V$-valued coframing, and the above structure equations 
take the form~$\d\omega=-\tfrac12C(\omega\w\omega)$,
where $C$ takes values 
in a $6$-dimensional affine subspace~$A\subset V\otimes\Lambda^2(V^*)$.

The exterior derivatives of these structure equations then give
the compatibility conditions:  One has $\d(\d\eta) = 0$, 
and, setting~$\rho = \d R +\theta R - R\theta$, one finds
\begin{equation}
\d(\d\theta) = \bigl(\rho-\tfrac14\tr(\rho) I_3\bigr)\w\eta\w{}^t\eta
            + \eta\w {}^t\eta\w\bigl(\rho-\tfrac14\tr(\rho) I_3\bigr),
\end{equation}
so~$A\subset V\otimes\Lambda^2(V^*)$, an affine subspace, 
is a Jacobi manifold.   Inspection shows that its tableau 
has characters~$s_0=s_1=0$, $s_2=s_3=3$, and $s_k=0$ for~$k=4,5,6$.  
Now, the three $3$-forms $\d(\d\theta)$ 
place $21$ restrictions on the $36$ coefficients of~$S\in\Hom(V,\bbR^6)$ 
in order that the equation $\rho - S(\eta,\theta)=0$ 
should define an integral element.  
Since $21 = c_0+c_1+c_2+c_3+c_4+c_5$, 
it follows that the tableau is involutive, 
so that~$A$ is an involutive Jacobi manifold.

Thus, Theorem~\ref{thm: CartanThmVar2} yields the expected
result that the general metric in dimension $3$ 
modulo diffeomorphism depends on $3$ functions of $2$ variables 
and $3$ functions of $3$ variables.  
Applying prolongation to the structure equations 
would yield that the number of differential invariants 
of the coframing of order at most~$k+1$ is
$$
\sum_{j=0}^6 {{k+j-1}\choose j}\,s_j
= \frac{k(k+1)(k+5)}2\,,
$$
which is the classically known number 
of independent derivatives of the curvature functions $R_{ij}$ 
of order at most $k{-}1$ (as expected, 
since the~$R_{ij}$ themselves are the first derivatives of the 
coframing~$\omega$).

\subsubsection{Constant Ricci eigenvalues}
More interesting are the proper submanifolds of~$A$ 
that are involutive Jacobi manifolds.
For example, suppose that one wanted to determine the generality
(modulo diffeomorphisms) of the space of metrics whose Ricci
tensor has constant eigenvalues.   Thus, one takes the above
structure equations and imposes that
\begin{equation}\label{eq: Rconsteigens}
R = {}^tP C P = P^{-1}CP,
\end{equation}
where $C$ is a constant diagonal matrix
with diagonal entries~$c = (c_1,c_2,c_3)$ where~$c_1\ge c_2\ge c_3$
and $P$ lies in~$\SO(3)$.  Restricting $R$ to take this form
in the structure equations defines a (non-affine) 
submanifold~$B_c\subset A \subset V\otimes\Lambda^2(V^*)$ 
that has dimension~$3$ (and is diffeomorphic to the quotient
of $\SO(3)$ by its diagonal subgroup) when the $c_i$ are distinct,
dimension~$2$ (and is diffeomorphic to $\bbR\bbP^2$) when two of
the $c_i$ are equal, and has dimension~$0$ (and is a single point)
when the $c_i$ are all equal.

One can write the structure equations in a relatively uniform way
by setting $\bar\eta = P\eta$ and $\pi = \d P P^{-1} - P\theta P^{-1}
= -{}^t\pi$, for then the above equations can be written
$$
0 = P\d(\d\theta)P^{-1} 
= (C\pi-\pi C)\w\bar\eta\w{}^t\bar\eta 
+ \bar\eta\w{}^t\bar\eta\w (C\pi-\pi C)          
$$
and the three $3$-forms in the skew-symmetric matrix on the righthand
side of this equation are seen to be
$$
\begin{aligned}
\Upsilon_1 &= \bigl( (c_3{-}c_1)\pi_2\w\bar\eta_2
                    -(c_1{-}c_2)\pi_3\w\bar\eta_3\bigr)\w\bar\eta_1 \\
\Upsilon_2 &= \bigl( (c_1{-}c_2)\pi_3\w\bar\eta_3
                    -(c_2{-}c_3)\pi_1\w\bar\eta_1\bigr)\w\bar\eta_2\\
\Upsilon_3 &= \bigl( (c_2{-}c_3)\pi_1\w\bar\eta_1
                    -(c_3{-}c_1)\pi_2\w\bar\eta_2\bigr)\w\bar\eta_3\\
\end{aligned}
$$
where~$\pi = (\pi_{ij}) = (\epsilon_{ijk}\pi_k)$.  
Note that the $1$-forms $\pi_1,\pi_2,\pi_3$ complete the components
of~$\theta$ and~$\eta$ to a basis on the frame bundle cross~$\SO(3)$.

In particular, this formula yields that~$B_c$ 
is a Jacobi manifold for any choice of~$c=(c_1,c_2,c_3)$ 
and that its tableau has rank~$3$ when the $c_i$ are distinct, 
rank~$2$ when exactly two of the $c_i$ are equal, 
and rank~$0$ when all of the $c_i$ are equal.  

When all of the $c_i$ are equal, the tableau is trivial, and
so there is a regular flag (with characters $s_i=0$) by definition.

\subsubsection{Three distinct, constant eigenvalues}
When the $c_i$ are distinct, one sees that there is
a regular flag for the integral elements described by~$\pi_i=0$
with characters~$s_2=3$ and $s_i=0$ otherwise.  
In fact, a hyperplane in this integral element fails to be
the end of a regular flag if and only if it is described
by an equation of the form~$\xi = \xi_1\bar\eta_1 +\xi_2\bar\eta_2
 +\xi_3\bar\eta_3=0$ with $\xi_1\xi_2\xi_3 = 0$.
Consequently, Theorem~\ref{thm: CartanThmVar2} applies,
and it follows that, up to diffeomorphism, Riemannian $3$-manifolds
with distinct constant eigenvalues of the Ricci tensor 
depend on $3$ arbitrary functions of $2$ variables.

\subsubsection{Two distinct, constant eigenvalues}
However, when exactly two of the $c_i$ are equal,
there is no regular flag:  One easily checks that
the codimensions of the polar spaces of a generic flag 
for this tableau are $c_0 = c_1 = 0$, while $c_k = 2$ for~$k\ge 2$. 
However, the codimension of the space of integral elements 
is~$9>c_0+c_1+c_2+c_3+c_4+c_5$, 
as the reader can check.  Thus, when two of the $c_i$ are equal,
the $2$-dimensional Jacobi manifold~$B_c$ is not involutive.
  
This does not mean that there are not Riemannian metrics 
for which the Ricci tensor has two distinct, constant eigenvalues.
To check this, though, one must prolong the structure equations
and use Theorem~\ref{thm: CartanThmVar} 
instead of Theorem~\ref{thm: CartanThmVar2} as follows:

Suppose that~$\Ric(g) = {}^t\eta\circ R\circ\eta$ 
has two distinct constant eigenvalues, say~$c_1\not=c_2$ 
and $c_2$ (of multiplicity~$2$).  This means that there
is a circle bundle~$F^4$ over~$M^3$ consisting of the $g$-orthonormal
coframes such that $\Ric(g) = c_1\,{\eta_1}^2 + c_2\, \bigl({\eta_2}^2+{\eta_3}^2\bigr)$.  As the reader can check, this implies that
the structure equations on~$F$ can be written in the form
\begin{equation}\label{eq: R2consteigensforms}
\begin{aligned}
\d\eta_1    &= -2 a_1\,\eta_2\w\eta_3\\
\d\eta_2    &=    -\eta_{23}\w\eta_3 
                  -\bigl(a_2\,\eta_2 + (a_1{+}a_3)\,\eta_3\bigr)\w\eta_1\\
\d\eta_3    &= \phm\eta_{23}\w\eta_2 
                  +\bigl((a_1{-}a_3)\,\eta_2 + a_2\,\eta_3\bigr)\w\eta_1\\
\d\eta_{23} &= c_2\,\eta_2\w\eta_3\\
\end{aligned}
\end{equation}
where $a_1$, $a_2$, $a_3$ are functions 
satisfying~${a_1}^2-{a_2}^2-{a_3}^2 = \tfrac12c_1$ and the relations
\begin{equation}\label{eq: R2consteigensfunctions}
\begin{aligned}
\d a_1 &=&   && &&\hfill 2b_3\,\eta_2       &&+&&2b_4\,\eta_3\\
\d a_2 &=&     {}-2a_3\,\eta_{23}
               &&+&&(b_4{+}b_1)\,\eta_2&&+&&(b_3{+}b_2)\,\eta_3\\
\d a_3 &=& {}\phm 2a_2\,\eta_{23}
               &&-&&(b_3{-}b_2)\,\eta_2&&+&&(b_4{-}b_1)\,\eta_3\\
\end{aligned}
\end{equation}
for some functions~$b_1$, $b_2$, $b_3$, and $b_4$.  

Conversely, given an augmented coframing~$(a,\eta)$ 
satisfying the structure equations~\eqref{eq: R2consteigensforms} 
and~\eqref{eq: R2consteigensfunctions} 
and ${a_1}^2{-}{a_2}^2{-}{a_3}^2=\tfrac12c_1$, 
the form~$g ={\eta_1}^2+{\eta_2}^2+{\eta_3}^2$ 
defines a metric on the space of leaves of~$\eta_i=0$ that satisfies
$\Ric(g) = c_1\,{\eta_1}^2 + c_2\,\bigl({\eta_2}^2+{\eta_3}^2\bigr)$.

Now, because~$\d({a_1}^2-{a_2}^2-{a_3}^2) = \tfrac12 \d(c_1) = 0$,
the~$b_i$ must satisfy the relations
$$
\begin{aligned}
 a_2\,b_1&&{}+&&a_3\,b_2&&{}-&&(a_3{+}2a_1)\,b_3&&{}+&&         a_2\,b_4 &= 0,
\\
-a_3\,b_1&&{}+&&a_2\,b_2&&{}+&&         a_2\,b_3&&{}+&&(a_3{-}2a_1)\,b_4 &= 0,
\end{aligned}
$$
so that there are really only two `free derivatives' among the $b_i$,
as these two relations are independent 
except when~$(a_1,a_2,a_3)=(0,0,0)$ 
(and this can only happen if $c_1=0$; but when $c_1=0$, 
I will remove the locus where the $a_i$ all vanish 
from further consideration). 

The reader can check that there exist $1$-forms $\beta_i\equiv\d b_i\mod
\{\eta_1,\eta_2,\eta_3,\eta_{23}\}$ such that
$$
\begin{aligned}
 a_2\,\beta_1&&{}+&&a_3\,\beta_2&&{}-&&(a_3{+}2a_1)\,\beta_3&&
{}+&&         a_2\,\beta_4 &= 0,
\\
-a_3\,\beta_1&&{}+&&a_2\,\beta_2&&{}+&&         a_2\,\beta_3&&
{}+&&(a_3{-}2a_1)\,\beta_4 &= 0,
\end{aligned}
$$
and such that the relations
$$
\begin{aligned}
\d(\d a_1) &\equiv&   && &&\hfill 2\beta_3\w\eta_2 &&+&&2\beta_4\w\eta_3\\
\d(\d a_2) &\equiv&
               &&+&&(\beta_4{+}\beta_1)\w\eta_2&&+
            &&(\beta_3{+}\beta_2)\w\eta_3\\
\d(\d a_3) &\equiv&
               &&-&&(\beta_3{-}\beta_2)\w\eta_2&&+
              &&(\beta_4{-}\beta_1)\w\eta_3\\
\end{aligned}
$$
are identities modulo the above structure equations.

Meanwhile, the tableau of free derivatives is involutive, 
with $s_1 = 2$ and $s_i=0$ for $i>2$.  
Thus, Theorem~\ref{thm: CartanThmVar} applies, 
and one sees that the general such metric 
depends on $2$ functions of $1$ variable.

(For those who know about the characteristic variety, one can compute
that a covector is characteristic iff it is of the form $\xi = \xi_2\,\eta_2 + \xi_3\,\eta_3$ where $(\xi_2,\xi_3)$ satisfy
$$
(a_1{+}a_3)\,{\xi_2}^2 - 2a_2\,\xi_2\xi_3 + (a_1{-}a_3)\,{\xi_3}^2 = 0
$$
In particular, the characteristic variety consists 
of two complex conjugate points when $c_1>0$, 
a double point when~$c_1=0$, 
and two real distinct points when $c_1 < 0$.  
Consequently, the metrics with $c_1>0$ 
will be real-analytic in harmonic coordinates.)

\subsection{Torsion-free $H$-structures}
This last set of examples are applications to 
the geometry of $H$-structures on $n$-manifolds.

Let $\eum$ be a vector space over~$\bbR$ of dimension~$m$, 
and let $H\subset\GL(\eum)$ be a connected Lie subgroup of dimension~$r$
with Lie algebra $\euh\subset\eugl(\eum)= \eum\otimes\eum^*$.

One is interested in determining the generality, modulo
diffeomorphism, of the (local) $H$-structures that are torsion-free,
and, more generally, of torsion-free connections on $m$-manifolds 
with holonomy contained in (a conjugate of)~$H$.

\begin{remark}
When the first prolongation space of~$\euh$ vanishes, 
i.e., when
$$
\euh^{(1)}=(\euh\otimes\eum^*)\cap\bigl(\eum\otimes S^2(\eum^*)\bigr)
 = (0),
$$
these two questions are essentially the same, since, in this case,
an $H$-structure that is torsion-free 
has a unique compatible torsion-free connection,
while a torsion-free connection on~$M$ whose holonomy 
is conjugate to a subgroup~$K\subset H$ 
defines an $P/N$-parameter family of torsion-free $H$-structures, 
where $P\subset\GL(\eum)$ 
is the group of elements~$p\in\GL(\eum)$ such that $p^{-1}Kp\subset H$, 
while $N\subset H$ is the group of elements such that $p^{-1}Kp = K$.
\end{remark}

Now, the geometric objects being studied 
are the $H$-structures~$\pi:B\to M^m$ endowed 
with a torsion-free compatible connection.
Letting~$\eta:TB\to\eum$ be the canonical $\eum$-valued $1$-form on~$B$,
then the torsion-free compatible connection defines
an $\euh$-valued $1$-form~$\theta:TB\to\euh$ 
satisfying the \emph{first structure equation}
\begin{equation}\label{eq: Hstreq1}
\d\eta = -\theta\w\eta,
\end{equation}
and having the equivariance~$R_h^*(\theta)=\Ad(a^{-1})\bigl(\theta\bigr)$
for all~$h\in H$.  

One then has the \emph{second structure equation}
\begin{equation}\label{eq: Hstreq2}
\d\theta = -\theta\w\theta + \tfrac12\,R(\eta\w\eta)
\end{equation}
for a unique 
\emph{curvature function}~$R:B\to\euh\otimes\Lambda^2(\eum^*)$.

Conversely, any manifold~$B$ endowed with a coframing
$$
\omega=(\eta,\theta):TB\to \eum\oplus\euh = V
$$
satisfying the equations~\eqref{eq: Hstreq1} and~\eqref{eq: Hstreq2}
for some function~$R:B\to \euh\otimes \Lambda^2(\eum^*)$ is locally
diffeomorphic to the canonical coframing constructed above
from the data of an $H$-structure on a manifold~$M$ endowed
with a compatible, torsion-free connection.

Now, because ~$\d(\d\eta)=0$, the function~$R$ 
satisfies the \emph{first Bianchi identity},
$$
0 = \d(\d\eta) = 
-\d\theta\w\eta +\theta\w\d\eta = -(\d\theta+\theta\w\theta)\w\eta
= -\tfrac12\,R(\eta\w\eta)\w\eta = 0.
$$
I.e., $R$ takes values 
in the kernel~$K_0(\euh)\subset\euh\otimes\Lambda^2(\eum^*)$
of the natural map
$$
\euh\otimes \Lambda^2(\eum^*)\subset 
\eum\otimes\eum^*\otimes \Lambda^2(\eum^*)
\to \eum\otimes\Lambda^3(\eum^*).
$$
(This is the algebraic content of the first Bianchi identity.)

In particular, the combined structure equations~\eqref{eq: Hstreq1}
and~\eqref{eq: Hstreq2} define a system of equations 
for the coframing~$\omega = (\eta,\theta)$ 
taking values in~$V = \eum\oplus\euh$
for which the structure function is required to take values 
in an affine space~$A_{\euh}\subset V\otimes\Lambda^2(V^*)$ 
that is modeled 
on the linear subspace~$K_0(\euh)\subset\euh\otimes\Lambda^2(\eum^*)
\subset V\otimes\Lambda^2(V^*)$.

Differentiating~\eqref{eq: Hstreq2} yields, 
after some algebra, the \emph{second Bianchi identity}
$$
0 = \d(\d\theta) 
= \tfrac12\bigl(\d R+\rho'_0(\theta)R\bigr)(\eta\w\eta),
$$
where~$\rho_0:H\to\GL\bigl(K_0(\euh)\bigr)$ 
is the induced representation of~$H$ on~$K_0(\euh)$,
and~$\rho'_0:\euh\to\eugl\bigl(K_0(\euh)\bigr)$ 
is the induced map on Lie algebras.  This  means that
$$
\d R = - \rho'_0(\theta)R + R'(\eta),
$$
where~$R':B\to K_0(\euh)\otimes\eum^*$ takes values 
in the kernel~$K_1(\euh)\subset K_0(\euh)\otimes\eum^*$ 
of the natural linear mapping defined by skew-symmetrization
$$
K_0(\euh)\otimes\eum^*\subset \euh\otimes\Lambda^2(\eum^*)\otimes\eum^*
\to \euh\otimes \Lambda^3(\eum^*).
$$
This is the algebraic content of the second Bianchi identity.

In particular,~$A_{\euh}$ is a Jacobi manifold, 
and it is natural to ask when it is involutive,
which is a condition on the Lie algebra~$\euh\subset\eugl(\eum)$.
In fact, the test for involutivity is quite simple in this case:
One computes the characters~$s_i$ of~$K_0(\euh)$ 
considered as a tableau in~$\euh\otimes\Lambda^2(\eum^*)$.  
Then Cartan's bound implies that
$$
\dim K_1(\euh) \le s_1 + 2\,s_2 + \cdots + m\,s_m
$$
with equality if and only if $K_0(\euh)$, 
and, consequently, $A_{\euh}$ are involutive.  
Thus, this is a purely algebraic calculation.

\begin{example}[Riemannian metrics]
In the case that~$H=\SO(m)$, the structure equations take
the familiar form
$$
\d\eta_{i} = -\theta_{ij}\w\eta_j
$$
with $\theta_{ij}=-\theta_{ji}$ satisfying
$$
\d\theta_{ij} = -\theta_{ik}\w\theta_{kj} 
                    + \tfrac12R_{ijkl}\,\eta_k\w\eta_l\,,
$$
where the components of the Riemann curvature function~$R_{ijkl}$
satisfy the familiar relations~$R_{ijkl}=-R_{jikl}=-R_{ijlk}$
and $R_{ijkl}+R_{iklj}+R_{iljk}=0$.   
For the tableau~$K_0\bigl(\euso(m)\bigr)$, 
the character $s_p$ when~$1\le p\le m$ 
is the number of independent quantities~$R_{ijkp}$ 
subject to the above relations that have~$1\le k < p$, 
which one finds to be
$$
s_p = \tfrac12\,m(p-1)(m-p+1).
$$
(Of course, $s_p=0$ for~$m < p < \tfrac12m(m{+}1)$.)  As expected,
$$
s_1+\cdots+s_m = \tfrac1{12}m^2(m^2{-}1) = \dim K_0\bigl(\euso(m)\bigr)
$$
and one also finds
$$
s_1 + 2\,s_2 + \cdots + m\,s_m 
= \tfrac1{24}m^2(m^2{-}1)(m{+}2) = \dim K_1\bigl(\euso(m)\bigr),
$$
as this latter number is the number of independent~$R'_{ijklq}$ 
that show up in the formulae for the derivatives of the $R_{ijkl}$:
$$
\d R_{ijkl} = -R_{qjkl}\theta_{qi}-R_{iqkl}\theta_{qj}
              -R_{ijql}\theta_{qk}-R_{ijkq}\theta_{ql}
              + R'_{ijklq}\,\eta_q\,,
$$
which are subject to the classical second Bianchi identity
$R'_{ijklq}+R'_{ijqkl}+R'_{ijlqk}=0$.  

Thus, as expected, $A_{\euso(m)}$ is involutive, 
and the Riemannian metrics in dimension~$m$
(up to diffeomorphism) depend on~$s_m=\tfrac12m(m{-}1)$ 
functions of~$m$ variables.  The above characters 
then determine the number of independent covariant derivatives
of the curvature functions to any given order of differentiation.
\end{example}

\begin{example}[Ricci-flat K\"ahler surfaces]
When~$H=\SU(2)\subset\GL(4,\bbR)$, one is, in effect, 
considering Riemannian $4$-manifolds with holonomy contained in~$\SU(2)$.  In this case, one finds that $\dim K_0(\euh)=5$ 
and that the representation~$\rho_0$ of~$\SU(2)$ is irreducible.  
Indeed, one finds that the structure equations take the form
$$
\begin{pmatrix}\d\eta_0\\\d\eta_1\\\d\eta_2\\\d\eta_3\\ \end{pmatrix}
= -\begin{pmatrix}
    0 &\phm\theta_1 &\phm\theta_2 &\phm\theta_3\\
    -\theta_1&0 & -\theta_3 & \phm\theta_2\\
    -\theta_2& \phm\theta_3&0 & -\theta_1 \\
    -\theta_3&-\theta_2 & \phm\theta_1 & 0\\
    \end{pmatrix}
\w
\begin{pmatrix} \eta_0\\ \eta_1\\ \eta_2\\ \eta_3\\ \end{pmatrix}
$$
and 
$$
\begin{pmatrix}\d\theta_1\\\d\theta_2\\\d\theta_3\\ \end{pmatrix}
= 
-\begin{pmatrix}2\,\theta_2\w\theta_3\\
                2\,\theta_3\w\theta_1\\
                2\,\theta_1\w\theta_2\end{pmatrix}
+\begin{pmatrix}
    R_{11} &R_{12} &R_{13}\\
    R_{21} &R_{22} &R_{23}\\
    R_{31} &R_{32} &R_{33}\\
    \end{pmatrix}
\begin{pmatrix} \eta_0\w\eta_1-\eta_2\w\eta_3\\ 
                \eta_0\w\eta_2-\eta_3\w\eta_1\\ 
                \eta_0\w\eta_3-\eta_1\w\eta_2\\ \end{pmatrix},
$$
where~$R_{ij}=R_{ji}$ and $R_{11}+R_{22}+R_{33}=0$.  

It has already been shown that this defines a Jacobi manifold 
in $V\otimes\Lambda^2(V^*)$ where $V = \bbR^4\oplus\eusu(2)\simeq\bbR^7$,
and its involutivity follows by inspection,
since the characters are visibly $s_2=3$, $s_3=2$, and $s_k=0$ 
all other~$k$, and since the dimension of~$K_1(\euh)$ 
is easily computed to be~$12 = 2s_2 + 3s_3$.  

Thus, Theorem~\ref{thm: CartanThmVar2} applies 
and justifies Cartan's famous assertion that metrics in dimension~$4$
with holonomy~$\SU(2)$ depend on $s_3=2$ arbitrary functions 
of three variables up to diffeomorphism.%
\footnote{
``Les espaces de Riemann pr\'ec\'edents d\'ependent 
de deux fonctions arbitraires de trois arguments$...$'' 
(\cite{Cartan1925}, pp.~55--56).  As far as I know, 
Cartan never gave any justification for this assertion,
which is the earliest case I know of 
in which an irreducible holonomy group is discussed,
other than the case of symmetric spaces.  It seems highly
likely to me, though, that he was already, at that time (1926), 
aware of some version of Theorem~\ref{thm: CartanThmVar2}.
}
\end{example}

\begin{example}[Segre structures of dimension $2m$]
One can also apply these theorems to the study of `higher order'
$H$-structures, i.e., structures for which there is no canonical
connection until after a prolongation has been performed.

Consider the generality 
of torsion-free $\GL(2,\bbR){\cdot}\GL(m,\bbR)$-structures 
on~$\bbR^{2m}$.  In this discussion, I'm going to assume that $m>2$, 
since the case $m=2$ is equivalent to conformal structures of type $(2,2)$ 
on $\bbR^4$, which (as I'll point out below) 
turns out to have a different set of structure equations.

If~$F\to U\subset\bbR^{2m}$ is a torsion-free
$\GL(2,\bbR){\cdot}\GL(m,\bbR)$-structure on~$U\subset\bbR^{2m}$, 
then there is a prolongation of~$F$ to a second-order structure~$F^{(1)}$,
with structure group a semi-direct product of $\GL(2,\bbR){\cdot}\GL(m,\bbR)$
with~$\bbR^{2m}$, on which there exists a Cartan connection~$\theta$ 
with values in~$\SL(m{+}2,\bbR)$, say
$$
\theta = \begin{pmatrix}
\psi^i_j&\eta^i_\beta\\
\omega^\alpha_j&\phi^\alpha_\beta
\end{pmatrix},
$$
where the index ranges are understood 
to be~$1\le i,j,k\le 2$ and $1\le \alpha,\beta,\gamma\le m$, and  
the forms that are entries of~$\theta$
satisfy the single trace relation~$\psi^i_i+\phi^\alpha_\alpha = 0$ 
but are otherwise linearly independent.
These components are required to satisfy structure equations 
of the form%
\footnote{Here is where the assumption that $m>2$ is important.
The correct structure equations for $m=2$ have nontrivial 
curvature terms in the structure equations for $d\psi^i_j$, 
as the reader can easily check.  In fact, for $m=2$, the structure
equations as I have written them are the structure equations
for the so-called `half-flat' conformal structures of type $(2,2)$,
i.e., the ones for which the self-dual part of the Weyl curvature
vanishes.}
$$
\begin{aligned}
d\omega^\alpha_j 
&= -\phi^\alpha_\beta\w\omega^\beta_j -\omega^\alpha_i\w \psi^i_j\\
d\psi^i_j &= -\psi^i_k\w\psi^k_j - \eta^i_\beta\w\omega^\beta_j\\
d\phi^\alpha_\beta 
&= -\phi^\alpha_\gamma\w\phi^\gamma_\beta - \omega^\alpha_i\w\eta^i_\beta
   + F^\alpha_{\beta\gamma\delta}\,\omega^\gamma_1\w\omega^\delta_2\,\\
d\eta^i_\beta &= -\psi^i_j\w\eta^j_\beta-\eta^i_\alpha\w\phi^\alpha_\beta
   +G^i_{\beta\gamma\delta}\,\omega^\gamma_1\w\omega^\delta_2\,\\
\noalign{\vskip3pt}
dF^\alpha_{\beta\gamma\delta} 
&= -F^\epsilon_{\beta\gamma\delta}\,\phi^\alpha_\epsilon
+F^\alpha_{\epsilon\gamma\delta}\,\phi^\epsilon_\beta
+F^\alpha_{\beta\epsilon\delta}\,\phi^\epsilon_\gamma
+F^\alpha_{\beta\gamma\epsilon}\,\phi^\epsilon_\delta
+R^{\alpha i}_{\beta\gamma\delta\epsilon}\,\omega^\epsilon_i\\
dG^i_{\beta\gamma\delta} 
&= -G^j_{\beta\gamma\delta}\,\psi^i_j
+G^i_{\epsilon\gamma\delta}\,\phi^\epsilon_\beta
+G^i_{\beta\epsilon\delta}\,\phi^\epsilon_\gamma
+G^i_{\beta\gamma\epsilon}\,\phi^\epsilon_\delta
-F^\alpha_{\beta\gamma\delta}\,\eta^i_\alpha
+Q^{ij}_{\beta\gamma\delta\epsilon}\,\omega^\epsilon_j\,.\\
\end{aligned}
$$   
The functions~$F$, $G$, $R$, and $Q$ must satisfy the relations
$$
\begin{aligned}
F^\alpha_{\beta\gamma\delta} &= F^\alpha_{\gamma\beta\delta}
= F^\alpha_{\beta\delta\gamma},
\qquad\qquad F^\alpha_{\alpha\gamma\delta} = 0,\\
G^i_{\beta\gamma\delta} &= G^i_{\gamma\beta\delta}
= G^i_{\beta\delta\gamma}\\
\end{aligned}
$$
as well as the relations
$$
R^{\alpha i}_{\beta\gamma\delta\epsilon}
=P^{\alpha i}_{\beta\gamma\delta\epsilon}
+{1\over{m{+}3}}\left(
 \delta^\alpha_\beta\,G^i_{\gamma\delta\epsilon}
+\delta^\alpha_\gamma\,G^i_{\beta\delta\epsilon}
+\delta^\alpha_\delta\,G^i_{\beta\gamma\epsilon}
-(m{+}2)\delta^\alpha_\epsilon\,G^i_{\beta\gamma\delta}\right),
$$
where~$P^{\alpha i}_{\beta\gamma\delta\epsilon}$ 
is fully symmetric in its lower indices 
and satisfies~$P^{\alpha i}_{\alpha\beta\gamma\delta}=0$.  
Finally, $Q^{ij}_{\beta\gamma\delta\epsilon}$ 
must be fully symmetric in its lower indices. 

Note that, in the application of Theorem~\ref{thm: CartanThmVar2}, 
the $1$-forms play the role of the $\omega^i$, 
the independent coefficients in~$F$ and~$G$ 
play the role of coordinates on the appropriate Jacobi manifold~$A$, 
while the independent coefficients in~$P$ and~$Q$ 
play the role of coordinates on~$A^{(1)}$.

While the number~$n$ is actually~$(m{+}2)^2-1=m^2{+}4m{+}3$, 
it's also clear from the structure equations 
that only the~$\omega^\alpha_i$ are effectively involved 
in the computation of the characters (since it is only these terms 
that appear with non-constant coefficients in the structure equations). 
Thus (modulo what should be thought of as `Cauchy characteristics'), 
the `effective dimension' is~$n=2m$. 

As the reader can check, the formal $\d^2=0$ conditions
needed for Theorem~\ref{thm: CartanThmVar2} are satisfied.  
Using the symmetries of the coefficients, the dimensions
$$
\dim A = (m{+}2){{m{+}2}\choose 3} - {{m{+}1}\choose 2} 
 = {1\over 6}\, m (m{+}1) (m^2{+}4m{+}1)
$$ 
and
$$
\dim A^{(1)} = (2m{+}4){{m{+}3}\choose 4} - 2 {{m{+}2}\choose 3}
 = {1\over 12}\,m(m{+}1)(m{+}2)(m^2{+}5m{+}2)
$$
are easily computed.  

It remains to compute the characters, 
which turn out to be
$$s_{k}= (k{-}1)\bigl(m^2-(k{-}4)m - 2k + 3\bigr)$$ 
for~$1\le k\le m{+}1$ and~$s_{k}=0$ for~$k>m{+}1$.   
Thus, $A$ is an involutive Jacobi manifold.

In particular, up to diffeomorphism, the general such torsion-free structure 
depends on~$s_{m+1}=m(m{+}1)$ functions of~$m{+}1$ variables and
there exists such a structure taking any given desired curvature value.
\end{example}

\begin{remark}[Torsion-free $H$-structures]
For many other examples of this kind, examining the generality
up to diffeomorphism of local torsion-free $H$-structures 
for various groups~$H\subset\GL(m,\bbR)$,
the reader might consult~\cite{Bryant96} and~\cite{Bryant00}.
Essentially all questions about the existence and generality
of local torsion-free structures of this kind can be resolved 
by an application of Theorem~\ref{thm: CartanThmVar2}.
\end{remark}

Sometimes one wants to consider a proper 
submanifold of~$A_{\euh}$ in order to investigate $H$-structures
with some extra condition on the curvature that captures some
geometric property.  

\begin{example}[Einstein-Weyl structures]
Consider Cartan's analysis of the so-called Einstein-Weyl structures 
on $3$-manifolds. These structures are $\CO(3)$-structures 
on $3$-manifolds endowed with a compatible torsion-free connection 
whose curvature function takes values in a certain $4$-dimensional
submanifold~$W\subset A_{\euco(3)}$.  

Here are their structure equations as Cartan writes them 
(with a very slight change in notation):
$$
\begin{pmatrix}\d\eta_1\\\d\eta_2\\\d\eta_3\\ \end{pmatrix}
= -\begin{pmatrix}
     \phm\theta_0 & \phm\theta_3 &    -\theta_2 \\
        -\theta_3 & \phm\theta_0 & \phm\theta_1 \\
     \phm\theta_2 &    -\theta_1 & \phm\theta_0 \\
    \end{pmatrix}
\w
\begin{pmatrix} \eta_1\\ \eta_2\\ \eta_3\\ \end{pmatrix}
$$
and
$$
\begin{pmatrix}\d\theta_0\\\d\theta_1\\\d\theta_2\\\d\theta_3\\ 
  \end{pmatrix}
= 
 \begin{pmatrix}0\\
                \theta_2\w\theta_3\\
                \theta_3\w\theta_1\\
                \theta_1\w\theta_2\end{pmatrix}
+\begin{pmatrix}
    2H_1 & 2H_2 & 2H_3\\
     H_0 &  H_3 & -H_2\\
    -H_3 &  H_0 &  H_1\\
     H_2 & -H_1 &  H_0\\  
    \end{pmatrix}
\begin{pmatrix} \eta_2\w\eta_3\\ 
                \eta_3\w\eta_1\\ 
                \eta_1\w\eta_2\\ \end{pmatrix},
$$
where the functions~$H_0$, $H_1$, $H_2$, and $H_3$
are coordinates on~$W$.
This is a set of structure equations of the type 
to which Theorem~\ref{thm: CartanThmVar2} might apply, where the
affine subspace~$W\subset V\otimes\Lambda^2(V^*)$ has dimension~$4$
and where~$V = \bbR^3\oplus\bbR\oplus\euso(3)\simeq \bbR^7$.
It is easy to verify that $W$ is a Jacobi manifold and is involutive
with $s_2=4$ and all other $s_k=0$.  Thus, 
Theorem~\ref{thm: CartanThmVar2} applies, 
and one recovers Cartan's result that
the general Einstein-Weyl space depends on four arbitrary 
functions of two variables~\cite{Cartan1943}.
\end{example}

When $A_{\euh}$ is not involutive, one can ask whether 
its prolongation, which is got by adjoining the equation
\begin{equation}\label{eq: Hstreq3}
\d R = -\rho'_0(\theta)R + R'(\eta)
\end{equation}
to the pair~\eqref{eq: Hstreq1} and~\eqref{eq: Hstreq2}, 
is involutive, where $R'$ takes values 
in the subspace~$K_1(\euh)\subset K_0(\euh)\otimes\eum^*$ 
that is the kernel of the natural mapping
$$
K_0(\euh)\otimes\eum^*
\subset \euh\otimes\Lambda^2(\eum^*)\otimes\eum^*
\to \euh\otimes\Lambda^3(\eum^*).
$$
The combined system of equations~\eqref{eq: Hstreq1}, 
\eqref{eq: Hstreq2}, and~\eqref{eq: Hstreq3} 
is of the type that Theorem~\ref{thm: CartanThmVar} 
was intended to treat, with $R$ playing the role of the~$a^\alpha$ 
and $R'$ playing the role of the $b^\sigma$.  

It may be necessary to repeat this prolongation process several times 
in order to arrive at a system of structure equations 
to which Theorem~\ref{thm: CartanThmVar} can be applied.

\begin{example}[Bochner-K\"ahler metrics]
An interesting example is when~$\eum = \bbC^n$
and~$H = \Un(n)\subset \GL(\eum)$.  In this case, one finds
that $K_0(\euh)$ is decomposable as a $\Un(n)$-module 
into three irreducible summands,
$$
K_0(\euh) = S(\euh) \oplus \Ric_0(\euh)\oplus B(\euh),
$$
where $S(\euh)\simeq \bbR$ 
corresponds to the space of curvature tensors of K\"ahler manifolds 
with constant holomorphic sectional curvature, 
$\Ric_0(\euh)$ 
corresponds to the space of traceless Ricci curvatures of K\"ahler metrics, 
and $B(\euh)$, known as the space of \emph{Bochner curvatures}, 
corresponds to the space of curvature tensors 
of Ricci-flat K\"ahler manifolds. 
A K\"ahler metric is said to be \emph{Bochner-K\"ahler}
if the $B(\euh)$-component of its curvature tensor vanishes,
i.e., 
if its curvature tensor takes values in $\Ric_0(\euh)\oplus S(\euh)$. 

This defines a Jacobi manifold $A\subset K_0(\euh)$ that is
not involutive, but, after a succession of applications of the
prolongation process (in fact, three prolongations), 
one arrives at a set of structure equations that has no free
derivatives but satisfies the hypotheses of Theorem~\ref{thm: CTFT},
thus showing that germs of Bochner-K\"ahler metrics 
depend on a finite number of constants.  
For details, see~\cite{Bryant01}.
\end{example}

\bibliographystyle{hamsplain}

\providecommand{\bysame}{\leavevmode\hbox to3em{\hrulefill}\thinspace}

\end{document}